\documentclass[11pt]{article}
\usepackage{graphicx,color}
\usepackage{amsmath, amsthm, amssymb}
%
%
%
%\newtheorem{Theorem}{Theorem}[section]
%\newtheorem{Definition}{Definition}[section]
%\newtheorem{Lemma}{Lemma}[section]
%\newtheorem{Proposition}[Theorem]{Proposition}
%\newtheorem{Question}{Question}[section]
%\newtheorem{Corollary}[Theorem]{Corollary}
%\newtheorem{Case}{Case}
%\newtheorem{Subcase}{Subcase}[Case]

%\newtheorem{Remark}[Theorem]{Remark}
%\newtheorem*{thma}{Theorem A}
%\newtheorem*{thma1}{Theorem B}
%\renewcommand{\baselinestretch}{1.2}
%\renewcommand{\qedsymbol}{}
%\language1

%\usepackage[colorlinks,linkcolor=red]{hyperref}

%\usepackage[colorlinks=true,backref=page]{hyperref} % Links in the pdf. Backref option: each bibliographical entry denotes where it was cited

\usepackage[labelsep=endash]{caption}

\usepackage{graphicx}
\usepackage{latexsym,amssymb}
\usepackage{amsthm}
\usepackage{indentfirst}
\usepackage{amsmath}
\usepackage{color}
\usepackage{xcolor}
\usepackage[all]{xy}
\usepackage{hyperref}
\textwidth=16. true cm
\textheight=24. true cm
\voffset=-2. true cm
\hoffset = -1.5 true cm

\newtheorem{Theorem}{Theorem}[section]
\newtheorem*{Theorem A}{Theorem A}
\newtheorem*{Theorem B}{Theorem B}
\newtheorem*{Theorem C}{Corollary C}
\newtheorem*{Theorem D}{Corollary D}

\newtheorem*{Conj*}{Conjecture}

\newtheorem{Definition}[Theorem]{Definition}

\newtheorem{Lemma}[Theorem]{Lemma}

\newtheorem{Remark}{Remark}

\newtheorem{Remark-numbered}[Theorem]{Remark}
\newtheorem{Remarks-numbered}[Theorem]{Remarks}
\newtheorem{Corollary}[Theorem]{Corollary}

\newtheorem*{Theorem B'}{Theorem B'}
\newtheorem{Claim-numbered}{Claim}

\def\triangleq{\stackrel{\triangle}{=}}

\def\dim{\operatorname{dim}}

\begin{document}
\title{Centralizer of fixed point free separating flows}
\author {Bo Han and Xiao Wen\footnote{Xiao Wen was partially supported by National Key R\&D Program of China (No.2022YFA1005801), National Natural Science
Foundation of China (No.12071018) and the Fundamental
Research Funds for the Central Universities.}}

\maketitle

\begin{abstract}
In this paper, we study the centralizer of a separating continuous flow without fixed points. We show that if $M$ is a compact metric space and $\phi_t:M\to M$ is a separating flow without fixed points, then $\phi_t$ has a quasi-trivial centralizer, that is, if a continuous flow $\psi_t$ commutes with $\phi_t$, then there exists a continuous function $A: M\to\mathbb{R}$ which is invariant along the orbit of $\phi_t$ such that $\psi_t(x)=\phi_{A(x)t}(x)$ holds for all $x\in M$. We also show that if $M$ is a compact Riemannian manifold without boundary and $\Phi_u$ is a separating $C^1$ $\mathbb{R}^d$-action on $M$, then $\Phi_u$ has a quasi-trivial centralizer, that is, if $\Psi_u$ is a $\mathbb{R}^d$-action on $M$ commuting with $\Phi_u$, then there is a continuous map $A: M\to\mathcal{M}_{d\times d}(\mathbb{R})$ which is invariant along orbit of $\Phi_u$ such that $\Psi_{u}(x)=\Phi_{A(x)u}(x)$ for all $x\in M$. These improve Theorem 1 of \cite{O} and Theorem 2 of \cite{BRV} respectively.

\bigskip

{\noindent}Keyword: Quasi-trivial centralizer, Separating flow, fixed point free flow, homogenous $\mathbb{R}^d$-action

\end{abstract}

\section{Introduction}
Expansiveness and centralizers are important notions in dynamical systems. In 1970, Walters firstly related these two concepts by proving that expansive homeomorphisms have unstable centralizers. For flows, the story is more complicated because of the choice of definitions of expansiveness for flows. Kato and Morimoto \cite{KM} proved that Anosov flow has quasi trivial centralizer, they firstly used the tool called expansive in their proof, which from Walters and Bowen. Then in 1976, Oka \cite{O} extended the above conclusion to expansive flows. Here the definition of expansiveness is from Bowen and Walters. Note that expansiveness of Bowen and Walters is relatively strict, and fixed points are isolated under this type of expansiveness. Recently, in 2018, Bonomo-Rocha-Varandas \cite{BRV} studied the centralizer for Komuro expansive flows, they improved the result of Oka by proving that the centralizer of every $C^\infty$ Komuro-expansive flow with non-resonant hyperbolic singularities is trivial. In another paper, Lennard Bakker, Todd Fisher and Boris Hasselblatt \cite{BFH} improved the result of Oka by showing that kinematic-expansivity implies that flows have quasi-trivial centralizer. For the $C^1$ flows, Martin Leguil, Davi Obata and Bruno Santiago \cite{LOS} proved that $C^1$ separating flow has quasi trivial centralizer if the singularities of flow are all hyperbolic. In this paper, we will consider the improvement of the above results by considering $C^0$ separating flows. Note that separating property is a substantial weakening of both kinematic expansiveness and Komuro expansiveness.

Let $M$ be a compact metric space with metric $d$, $\phi:\mathbb{R}\times M\to M$ be a continuous flow without fixed points. Let $\phi_t(\cdot)=\phi(t,\cdot)$. The {\it orbit} of point $x$ under the action of $\phi$ is denoted by $Orb(x, \phi)=\{\phi_t(x)|t\in\mathbb{R}\}$. If $Orb(x, \phi)=\{x\}$ then we say $x$ is a {\it fixed point} of $\phi_t$. If there is $T>0$ such that $\phi_T(x)=x$ and $x$ is not a fixed point, then  we say that $x$ is a {\it periodic point} of $\phi_t$ and the orbit of $x$ is a {\it periodic orbit}.

\begin{Definition}
Let $M$ be a compact metric space with metric $d$, $\phi:\mathbb{R}\times M\to M$ be a continuous flow. We say that $\phi$ is separating if there exists $\delta>0$ such that for any $x,y\in M$, if $$d(\phi_t(x),\phi_t(y))<\delta$$
for any $t\in\mathbb{R}$, then $y\in Orb(x, \phi)$.  We also call $\delta$ the separating constant of $\phi$.
\end{Definition}

Separating property is a weak form of expansivity, introduced by Gura \cite{G}. It is known that if a flow is expansive, or Komuro expansive or kinematic expansive, then it is separating. If $\phi_t$ is separating and the set of fixed points is open in $M$, then $\phi_t$ is BH-expansive which defined in \cite{KH}(see \cite{Ar}).

\begin{Definition}
Let $M$ be a compact metric space with metric $d$, $\phi:\mathbb{R}\times M\to M$ be a continuous flow. The centralizer of $\phi$ is the set of all the continuous flows commute with $\phi$, denoted by
$$\mathcal{Z}(\phi)=\{\psi|\psi:\mathbb{R}\times M\to M \ \hbox{is a continuous flow and } \ \psi_s\circ\phi_t=\phi_t\circ\psi_s \ \hbox{for any} \ t,s\in\mathbb{R}\}.$$
\end{Definition}

We say the centralizer of flow $\phi$ is {\it quasi trivial} if for any $\psi\in\mathcal{Z}(\phi)$, there exists a continuous function $A:M\to\mathbb{R}$ such that $A$ is invariant along to the orbit of $\phi$ and
$\psi_t(x)=\phi_{A(x)t}(x)$ for every $(t,x)\in\mathbb{R}\times M$. The following theorem is one of the main result of this paper which shows that every separating fixed point free flow has quasi trivial centralizer.

\begin{Theorem A}\label{thmA}
Let $M$ be a compact metric space with metric $d$, $\phi:\mathbb{R}\times M\to M$ be a continuous flow without fixed points. If $\phi_t$ is separating, then $\phi_t$ has quasi trivial centralizer.
\end{Theorem A}

We also consider the centralizer of separating fixed point free $\mathbb{R}^d$-actions. As usual we say that a continuous map $\Phi:\mathbb{R}^d\times M\to M$ is a continuous {\it $\mathbb{R}^d$-action} on a compact metric space $M$ if $\Phi(0, x)=x$ for all $x\in M$ and
$\Phi_v\triangleq\Phi(v,\cdot):M\to M$ is a homeomorphism on $M$ and $\Phi_{v+u}=\Phi_v\circ\Phi_u$ for every $v,u\in\mathbb{R}^d$. Given $x\in M$, the set $Orb(x, \Phi)=\{\Phi_v(x)|v\in\mathbb{R}^d\}$ is called the orbit of $x$ with respect to $\Phi$. If $M$ is a $C^r$ manifold and a continuous $\mathbb{R}^d$-action $\Phi$ is also $C^r$ ($r\geq 1$), then we say $\Phi$ is a $C^r$ $\mathbb{R}^d$-action.

By applying Theorem A, we can prove the following result of $\mathbb{R}^d$-actions.

\begin{Corollary}\label{cor1.3}
Take $d\geq1$ and let $\Phi:\mathbb{R}^d\times M\to M$ be a continuous $\mathbb{R}^d$-action on a compact metric space $M$. If there exists $v\in\mathbb{R}^d$ such that $(\Phi_{tv})_{t\in\mathbb{R}}$ is a fixed point free separating flow then the orbits of $\Phi$ are curves in $M$ and coincide with the orbits of flow $(\Phi_{tv})_{t\in\mathbb{R}}$.
\end{Corollary}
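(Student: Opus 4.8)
The plan is to deduce Corollary~\ref{cor1.3} directly from Theorem~A, using only that the group $\mathbb{R}^d$ is abelian. Set $\phi_t\triangleq\Phi_{tv}$, which by hypothesis is a fixed-point-free separating flow on the compact metric space $M$. One inclusion is free: since $\phi_t(x)=\Phi_{tv}(x)$, we have $Orb(x,\phi)\subseteq Orb(x,\Phi)$ for every $x\in M$. Also, because $\phi$ has no fixed points, for each $x$ the map $t\mapsto\phi_t(x)$ is either injective on $\mathbb{R}$ (if $x$ is not periodic: indeed $\phi_{t_1}(x)=\phi_{t_2}(x)$ with $t_1\neq t_2$ would force $\phi_{t_1-t_2}(x)=x$, making $x$ periodic) or factors through a topological circle (if $x$ is periodic), so $Orb(x,\phi)$ is a curve in $M$.

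For the reverse inclusion, fix an arbitrary $u\in\mathbb{R}^d$ and define $\psi_t\triangleq\Phi_{tu}$. Then $\psi$ is a continuous flow: $\psi_0=\Phi_0=\id$, $\psi_{t+s}=\Phi_{(t+s)u}=\Phi_{tu}\circ\Phi_{su}=\psi_t\circ\psi_s$, and joint continuity in $(t,x)$ is inherited from that of $\Phi$. Since $\mathbb{R}^d$ is commutative, for all $s,t\in\mathbb{R}$ we get $\psi_t\circ\phi_s=\Phi_{tu+sv}=\Phi_{sv+tu}=\phi_s\circ\psi_t$, i.e.\ $\psi\in\mathcal{Z}(\phi)$. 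Now Theorem~A applies: $\phi$, being a separating flow without fixed points, has quasi-trivial centralizer, so there is a continuous function $A\colon M\to\mathbb{R}$, invariant along the orbits of $\phi$, with $\psi_t(x)=\phi_{A(x)t}(x)$ for every $(t,x)\in\mathbb{R}\times M$. Evaluating at $t=1$ yields $\Phi_u(x)=\psi_1(x)=\phi_{A(x)}(x)\in Orb(x,\phi)$. As $u$ was arbitrary, $Orb(x,\Phi)\subseteq Orb(x,\phi)$, and together with the trivial inclusion this gives $Orb(x,\Phi)=Orb(x,\phi)$, which we already observed is a curve.

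I do not foresee a genuine difficulty here: the formal steps (checking that $\psi$ is a continuous flow commuting with $\phi$, invoking Theorem~A, and specializing to $t=1$) are routine, and the commutativity of $\mathbb{R}^d$ is exactly what allows an arbitrary direction $u$ to be absorbed into the chosen one-parameter direction $v$. The only point that merits a sentence of care is the claim that the orbits of a fixed-point-free continuous flow are \emph{curves}, meaning continuous injective images of $\mathbb{R}$ (or of $S^1$ in the periodic case) rather than embedded submanifolds, since on a general compact metric space such an orbit need not be locally closed.
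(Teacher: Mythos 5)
Your proposal is correct and follows essentially the same route as the paper: set $\phi_t=\Phi_{tv}$, observe that each $\psi_t=\Phi_{tu}$ is a continuous flow commuting with $\phi$ by commutativity of $\mathbb{R}^d$, and invoke Theorem~A to conclude $\Phi_u(x)=\phi_{A(x)}(x)\in Orb(x,\phi)$. Your added remark that the orbits are injective images of $\mathbb{R}$ or circles (rather than embedded submanifolds) is a reasonable clarification the paper leaves implicit.
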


Similar to separating flows, we can define separating property for $\mathbb{R}^d$ actions.

\begin{Definition}
Let $M$ be a compact metric space with metric $d$ and $\Phi:\mathbb{R}^d\times M\to M$ be a continuous $\mathbb{R}^d$-action. We say that $\Phi$ is separating if there exists a separating constant $\delta>0$ such that for any $x,y\in M$ satisfying $d(\Phi_v(x),\Phi_v(y))<\delta$ for all $v\in\mathbb{R}^d$, $y$ belongs to the orbit of $x$ under the action of $\Phi$.
\end{Definition}

Similarly, given a $C^r$-action $\Phi:\mathbb{R}^d\times M\to M$, $r\geq0$, we define its $C^k$($0\leq k\leq r$) centralizer as the set
$$\mathcal{Z}^k(\Phi)=\{\Psi:\mathbb{R}^d\times M\to M \text{ is a } C^k \text{ action } |\Phi_v\circ\Psi_u=\Psi_u\circ\Phi_v,\forall v, u\in\mathbb{R}^d\}.$$
We say that $\Phi$ has a $C^k$ quasi-trivial centralizer if for any $\Psi\in\mathcal{Z}^k(\Phi)$ there exists a continuous map $A: M\to\mathcal{M}_{d\times d}(\mathbb{R})$ satisfying $A(x)=A(\Phi_v(x))$ for every $v\in\mathbb{R}^d$, $x\in M$ and so that $$\Psi_v(x)=\Phi_{A(x)v}(x)$$ for every $(v,x)\in\mathbb{R}^d\times M$, where $\mathcal{M}_{d\times d}(\mathbb{R})$ is the space of all $d\times d$ real matrices.

\begin{Definition}
Let $M$ be a compact Riemannian manifold. We say that a $C^1$ $\mathbb{R}^d$-action $\Phi:\mathbb{R}^d\times M\to M$ is homogeneous (or locally free) if all orbits by $\Phi$ are $d$-dimensional immersion submanifolds on $M$ with dimension equal to $d$.
\end{Definition}

It is known that not every manifold admits homogeneous $\mathbb{R}^d$-actions (for example, Poincar$\rm{\acute{e}}$-Bendixson theorem told us that every $C^1$-flow on $\mathbb{S}^2$ admits singularities). On the other hand, the only $n$-dimensional manifold that support homogeneous $\mathbb{R}^n$-actions is torus $\mathbb{T}^n$ (see \cite{A}), and the space of homogeneous $\mathbb{R}^d$-actions forms an open subset of all $\mathbb{R}^d$-actions.

In this paper we also prove the following theorem for separating $C^1$ $\mathbb{R}^d$-actions, which can be seen an improving of Theorem 2 of \cite{BRV}.

\begin{Theorem B}
Let $M$ be a compact Riemannian manifold without boundary and $\Phi: \mathbb{R}^d\times M\to M$ be a $C^1$ $\mathbb{R}^d$-action on $M$, $\dim M\geq d$. If $\Phi$ is separating and
homogeneous then $\mathcal{Z}^0(\Phi)$ is quasi-trivial, i.e., for every $\Psi\in\mathcal{Z}^0(\Phi)$ there exists a continuous-map $A:M\to\mathcal{M}_{d\times d}(\mathbb{R})$  satisfying $A(x)=A(\Phi_v(x))$ for every $(v,x)\in\mathbb{R}^d\times M$ and so that $\Psi_v(x)=\Phi_{A(x)v}(x)$ for every $(v,x)\in\mathbb{R}^d\times M$.
\end{Theorem B}

\section{Centralizer of separating fixed point free flow}

Before we start the proof of Theorem A, we prepare several lemmas for fixed point free flows. While some of these lemmas are well-known, we provide proofs here for the sake of completeness. For continuous flow $\phi$ on a metric space $M$, we define the corresponding constant $\varepsilon_0(\phi)$ as follows
$$\varepsilon_0(\phi)=\inf(\{\tau>0|\exists \ x\in M \ \text{ s. t. } \ \phi_{\tau}(x)=x, \text{ and }\phi_t(x)\neq x \ \forall \ t\in(0,\tau)\}\cup\{1\}).$$
By the definition of $\varepsilon_0(\phi)$, it is easy to see that for any $t\in(-\varepsilon_0(\phi), \varepsilon_0(\phi))$, if one can find a point $x\in M$ such that $\phi_t(x)=x$, then $t=0$.

Throughout the rest of the paper, we assume that the flow $\phi_t$ has no fixed points. Denote by $B_\delta(x)$ the ball centered at $x$ in $M$ of radius $\delta$, and $\bar{B}_\delta(x)$ the closure of $B_\delta(x)$.

\begin{Lemma}\label{lem1}
Let $\phi:\mathbb{R}\times M\to M$ be a continuous flow without fixed points, then $\varepsilon_0(\phi)>0$.
\end{Lemma}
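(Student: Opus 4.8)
The plan is to argue by contradiction using compactness of $M$ together with the joint continuity of the flow. Observe first that $\varepsilon_0(\phi)>0$ can only fail if the infimum in the definition is realized by genuinely short periodic orbits, since the set over which we take the infimum contains $1>0$: thus, if $\varepsilon_0(\phi)=0$, there must exist periodic points $p_n\in M$ with minimal periods $\tau_n>0$ and $\tau_n\to 0$. In particular $\phi_{\tau_n}(p_n)=p_n$, and consequently $\phi_{m\tau_n}(p_n)=p_n$ for every integer $m$. I will derive from this the existence of a fixed point, contradicting the hypothesis.

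First I would pass to a subsequence so that $p_n\to p$ for some $p\in M$, using compactness of $M$. Since $\phi$ has no fixed points, $p$ is not fixed, so there is some $t_0>0$ with $\phi_{t_0}(p)\neq p$; put $\eta=d(\phi_{t_0}(p),p)>0$. (One can take $t_0>0$ because $\phi_{t}(p)\neq p$ forces $\phi_{-t}(p)\neq p$ as well.) The key step is then to iterate the period an appropriate number of times so as to reach the fixed scale $t_0$: set $m_n=\lfloor t_0/\tau_n\rfloor$, so that $m_n\tau_n\in(t_0-\tau_n,\,t_0]$ and hence $m_n\tau_n\to t_0$ since $\tau_n\to 0$. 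Applying joint continuity of the map $\phi:\mathbb{R}\times M\to M$ at the point $(t_0,p)$, and using that $(m_n\tau_n,p_n)\to(t_0,p)$, we obtain $\phi_{m_n\tau_n}(p_n)\to\phi_{t_0}(p)$. But $\phi_{m_n\tau_n}(p_n)=p_n\to p$, so $\phi_{t_0}(p)=p$, contradicting $d(\phi_{t_0}(p),p)=\eta>0$. Therefore $\varepsilon_0(\phi)>0$.

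The one point that genuinely requires the idea above — rather than being routine — is that we have no a priori control of the short return times $\tau_n$ relative to the fixed displacement scale $t_0$; multiplying by $m_n\approx t_0/\tau_n$ is exactly what converts "arbitrarily short periodic orbits" into "a fixed point in the limit", which the hypothesis forbids. The remaining ingredients (compactness of $M$, joint continuity of the flow, and the fact that $p$ cannot be fixed) are standard and need no further work.
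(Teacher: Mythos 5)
Your proof is correct and follows essentially the same route as the paper: both pass to a limit point of the short-period orbits and use that integer multiples $m_n\tau_n$ of the vanishing periods can be made to converge to a fixed positive time, so that joint continuity of $\phi$ forces the limit point to be fixed, contradicting the hypothesis. The only cosmetic difference is that the paper shows $\phi_t(x_0)=x_0$ for \emph{every} $t$ and then invokes the no-fixed-point hypothesis, while you aim directly at a single $t_0$ witnessing $\phi_{t_0}(p)\neq p$; this changes nothing of substance.
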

\begin{proof}
Suppose the contrary that we have $\varepsilon_0(\phi)=0$. By the definition of the infimum, for any $\varepsilon>0$, there exists $t'\in\mathbb{R}^+$ and $x'\in M$ with $0<t'<\varepsilon$ and $\phi_{t'}(x')=x'$. Taking $\varepsilon=1,\frac{1}{2},\cdots,\frac{1}{n},\cdots$, we can get two sequences $\{t_n|n\geq1\}\subset\mathbb{R}^+$ and $\{x_n|n\geq1\}\subset M$ such that $0<t_n<\frac{1}{n}$ and $\phi_{t_n}(x_n)=x_n$. Since $M$ is compact, one can take a subsequence $\{x_{n_k}|k\geq1\}\subset\{x_n|n\geq1\}$ satisfied $x_{n_k}\to x_0\in M$ corresponding to $t_{n_k}\to0, \  \phi_{t_{n_k}}(x_{n_k})=x_{n_k}$. Take any $t\in\mathbb{R}$ fixed, for any $t_{n_k}$, there exists a unique $l_{n_k}\in\mathbb{Z}$ such that
$$l_{n_k}t_{n_k}<t\leq(l_{n_k}+1)t_{n_k},$$
then $0<t-l_{n_k}t_{n_k}\leq t_{n_k}$ for all $n_k$ and $\lim\limits_{n_k\to\infty}(t-l_{n_k}t_{n_k})=0.$ Therefore $$\phi_t(x_{n_k})\in\phi_{(l_{n_k}t_{n_k},(l_{n_k}+1)t_{n_k}]}(x_{n_k}),$$
and $\phi_{l_{n_k}t_{n_k}}(x_{n_k})=\phi_{t_{n_k}}(x_{n_k})=\phi_{(l_{n_k}+1)t_{n_k}}(x_{n_k})$, by the continuity of $\phi_t$, we must have $$\phi_t(x_0)=\lim_{n_k\to\infty}\phi_{l_{n_k}t_{n_k}}(x_{n_k})=\lim_{n_k\to\infty}\phi_{t_{n_k}}(x_{n_k})=x_0.$$
According to the arbitrariness of $t$, we have $\phi_t(x_0)=x_0$ for any $t\in\mathbb{R}$. It means that $x_0$ is a fixed point of $\phi_t$, that is a contradiction.
\end{proof}

\begin{Lemma}\label{lem2}
Let $\phi:\mathbb{R}\times M\to M$ be a continuous flow without fixed points. For any $T\in(0,\varepsilon_0(\phi))$, we have a constant $\eta>0$ with $d(\phi_{T}(x),x)\geq\eta$ for any $x\in M$.
\end{Lemma}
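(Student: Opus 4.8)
The plan is a straightforward compactness-and-continuity argument by contradiction. Suppose the statement fails for some fixed $T\in(0,\varepsilon_0(\phi))$: then for every $n\geq 1$ one can choose $x_n\in M$ with $d(\phi_T(x_n),x_n)<1/n$. Using compactness of $M$, I would pass to a subsequence $x_{n_k}\to x_0\in M$.

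The next step is to identify $x_0$ as a periodic point of period $T$. By joint continuity of the flow, $\phi_T(x_{n_k})\to\phi_T(x_0)$; on the other hand, the triangle inequality gives
$$d(\phi_T(x_{n_k}),x_0)\leq d(\phi_T(x_{n_k}),x_{n_k})+d(x_{n_k},x_0)\longrightarrow 0,$$
so $\phi_T(x_{n_k})\to x_0$ as well. By uniqueness of limits, $\phi_T(x_0)=x_0$.

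Finally I would derive a contradiction from the choice $T\in(0,\varepsilon_0(\phi))$. Since $\phi_t$ has no fixed points, $x_0$ is not a fixed point, so $\phi_T(x_0)=x_0$ together with $T>0$ forces $x_0$ to be a periodic point; let $\tau>0$ denote its minimal period. Then $\tau\leq T$, while by the very definition of $\varepsilon_0(\phi)$ we have $\varepsilon_0(\phi)\leq\tau$, whence $T<\varepsilon_0(\phi)\leq\tau\leq T$, which is absurd. (Equivalently, one can invoke directly the observation recorded right after the definition of $\varepsilon_0(\phi)$: if $\phi_t(x)=x$ for some $x\in M$ and some $t\in(-\varepsilon_0(\phi),\varepsilon_0(\phi))$, then $t=0$, contradicting $T>0$.)

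I do not expect any genuine obstacle here. The only point that requires a little care is to make sure the fixed-point-free hypothesis is actually used: it is what guarantees that $\phi_T(x_0)=x_0$ really makes $x_0$ a periodic point with a well-defined minimal period $\tau$, so that $\varepsilon_0(\phi)\leq\tau$ applies. Note that Lemma~\ref{lem1} is not needed in this proof, although it is what ensures the interval $(0,\varepsilon_0(\phi))$ in the statement is nonempty in the first place.
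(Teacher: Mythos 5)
Your proposal is correct and follows essentially the same route as the paper: a contradiction argument extracting, by compactness, a limit point $x_0$ with $\phi_T(x_0)=x_0$, then invoking the definition of $\varepsilon_0(\phi)$ (via the observation that no $t\in(0,\varepsilon_0(\phi))$ can satisfy $\phi_t(x)=x$) to conclude. Your write-up merely spells out the limit identification and the final contradiction in slightly more detail than the paper does.
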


\begin{proof}
Suppose the result is false, there exists $t\in(0,\varepsilon_0(\phi))$ such that for any $\eta>0$, we have $x\in M$ with $d(\phi_t(x),x)<\eta$. By taking $\eta=\frac{1}{n}$ in turn, we obtain a sequences $\{x_n\}\subset M$ satisfied $d(\phi_{t}(x_n),x_n)<\frac{1}{n}$. Since $M$ is compact metric space, we can take subsequence $\{x_{n_k}\}$ of $\{x_n\}$ such that $x_{n_k}\to x\in M$. Then we get $d(\phi_t(x),x)=0$, it means that $\phi_t(x)=x$, that contradicts with the choice of $\varepsilon_0(\phi)$.
\end{proof}

\begin{Lemma}\label{cla1}
Let $T_0\in(0, \varepsilon_0(\phi))$ be given, $\eta$ is the constant corresponding to $T_0$ in Lemma \ref{lem2}. There exist $\mu_1\in(0,T_0),\delta>0$ such that for every $x\in M, p\in\bar{B}_{\delta}(x), t\in[-\mu_1, \mu_1], \tau\in[0,T_0]$, we have
\begin{itemize}
  \item[$(1)$] $d(\phi_t(p),x)<\frac{\eta}{4}$;
  \item[$(2)$] $d(\phi_{\tau}(p),\phi_{\tau}(x))\leq\frac{\eta\mu_1}{12T_0}(<\frac{\eta}{12})$;
  \item[$(3)$] $d(\phi_{\pm\mu_1/3}(x), x)\geq 2\delta$.
\end{itemize}
\end{Lemma}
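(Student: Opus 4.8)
The plan is to obtain all three estimates from uniform continuity of $\phi$ on the compact set $[-T_0,T_0]\times M$, combined with Lemma \ref{lem2}; the one subtlety is that the constants must be chosen in a definite order, because the right-hand side of $(2)$ already involves $\mu_1$, so $\mu_1$ has to be fixed before $\delta$.

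First I would use that $[-T_0,T_0]\times M$ is compact (as $M$ is), hence $\phi$ is uniformly continuous on it. Feeding in the target $\eta/4$ gives a number $\rho_1\in(0,T_0)$ so that $d(\phi_t(p),x)<\eta/4$ whenever $|t|\le\rho_1$ and $d(p,x)\le\rho_1$ — this is exactly $(1)$, provided we will later guarantee $\mu_1\le\rho_1$ and $\delta\le\rho_1$. At this point I fix $\mu_1:=\min\{\rho_1,T_0/2\}$, which lies in $(0,T_0)$, and set $\kappa:=\eta\mu_1/(12T_0)$; note $0<\kappa<\eta/12$. Applying uniform continuity once more — now only in the space variable, with $\tau$ ranging over $[0,T_0]$ and with target $\kappa$ — I get $\rho_2>0$ with $d(\phi_\tau(p),\phi_\tau(x))\le\kappa$ for all $\tau\in[0,T_0]$ as soon as $d(p,x)\le\rho_2$; this yields $(2)$ once $\delta\le\rho_2$.

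For $(3)$ I would invoke Lemma \ref{lem2}: since $0<\mu_1/3<T_0<\varepsilon_0(\phi)$, that lemma applied with $T=\mu_1/3$ produces $\eta'>0$ such that $d(\phi_{\mu_1/3}(x),x)\ge\eta'$ for every $x\in M$. The case $-\mu_1/3$ reduces to this by the substitution $y=\phi_{-\mu_1/3}(x)$, for which $\phi_{\mu_1/3}(y)=x$ and hence $d(\phi_{-\mu_1/3}(x),x)=d(y,\phi_{\mu_1/3}(y))\ge\eta'$. So $(3)$ holds whenever $2\delta\le\eta'$. Finally I would simply set $\delta:=\tfrac12\min\{\rho_1,\rho_2,\eta'\}$, which simultaneously satisfies $\delta<\rho_1$, $\delta<\rho_2$ and $2\delta\le\eta'$; then all three items hold for every $x\in M$, $p\in\bar{B}_{\delta}(x)$, $t\in[-\mu_1,\mu_1]$ and $\tau\in[0,T_0]$.

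I do not expect a genuine obstacle here: the statement is essentially a packaging of uniform continuity together with Lemma \ref{lem2}. The only thing to get right is the bookkeeping of the constants — fix $\mu_1$ from $(1)$ first, then form $\kappa$, then extract $\rho_2$ and $\eta'$ (both of which legitimately depend on $\mu_1$), and only at the end choose $\delta$ small compared to $\rho_1,\rho_2,\eta'$ — so that no quantity is required to obey a constraint involving something defined afterwards.
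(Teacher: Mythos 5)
Your proposal is correct and follows essentially the same route as the paper: the paper establishes $(1)$ and $(2)$ by sequential-compactness contradiction arguments (which is just uniform continuity of $\phi$ on $[-T_0,T_0]\times M$ phrased differently), fixes $\mu_1$ before $\delta$, and then obtains $(3)$ from Lemma \ref{lem2} exactly as you do. Your explicit bookkeeping of the order in which $\mu_1$, $\kappa$, $\rho_2$, $\eta'$ and finally $\delta$ are chosen, and the symmetry argument $d(\phi_{-\mu_1/3}(x),x)=d(y,\phi_{\mu_1/3}(y))$ for the $-$ case of $(3)$, are both sound.
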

\begin{proof}
We first prove that there exist $\mu_1\in(0,\frac{T_0}{3}), \delta_1>0$ small enough such that for every $x\in M,p\in\bar{B}_{\delta_1}(x), t\in[-\mu_1, \mu_1],$
$$d(\phi_t(p),x)<\frac{\eta}{4}.$$
Assume the contrary holds, we can find sequences $\{x_n\}$ in $M$, $\{p_n\}$ in $\bar{B}_{\frac{1}{n}}(x_n)$ and $\{t_n\}$ in $[-\frac{1}{n}, \frac{1}{n}]$ such that
$$d(\phi_{t_n}(p_n),x_n)\geq \frac{\eta}{4}.$$
By compactness of $M$, we can take a subsequence $\{x_{n_k}\}$ of $\{x_n\}$ such that $x_{n_k}\to x\in M$. We can see that $p_{n_k}\to x$ and $t_{n_k}\to 0$. Then we have $\phi_{t_{n_k}}(p_{n_k})\to x$ and $d(\phi_{t_{n_k}}(p_{n_k}), x_{n_k})\to 0$. This contradicts with $d(\phi_{t_{n_k}}(p_{n_k}),x_{n_k})\geq\frac{\eta}{4}$ for all $k$.

Let constant $\mu_1\in(0, \frac{T_0}{3})$ be chosen as above. Now we show that there is $0<\delta_2<\delta_1$ such that for every $x\in M, p\in\bar{B}_{\delta_2}(x), \tau\in[0,T_0]$, we have
$$d(\phi_{\tau}(p),\phi_{\tau}(x))\leq\frac{\eta\mu_1}{12T_0}.$$
Assume the contrary, then there exist sequences $x_n\in M, p_n\in\bar{B}_{\frac{1}{n}}(x_n),\tau_n\in[0,T_0]$ such that $$d(\phi_{\tau_n}(p_n),\phi_{\tau_n}(x_n))\geq\frac{\eta\mu_1}{12T_0}.$$ We can take subsequences such that $x_{n_k}\to x\in M, \tau_{n_k}\to\tau\in[0,T_0]$. Let $k\to\infty$ we have
$$0=d(\phi_{\tau}(x),\phi_{\tau}(x))\geq\frac{\eta\mu_1}{12T_0}.$$
This is a contradiction. By applying Lemma \ref{lem2}, we can find $0<\delta<\delta_2$ small enough such that $$d(\phi_{\pm\mu_1/3}(x), x)\geq 2\delta$$ for any $x\in M$. One can check that the constants $\mu_1$ and $\delta$ satisfy the request of the lemma.
\end{proof}

The following lemma says that we can construct ``cross sections'' for continuous flows on a compact metric space. The idea follows \cite{Wh}. One can see similar idea also in \cite{Ar2}.

\begin{Lemma}\label{lem3}
Let $\phi:\mathbb{R}\times M\to M$ be a continuous flow without fixed points, $\varepsilon_0(\phi)$ be the constant for $\phi_t$ in Lemma \ref{lem2}. There exists a family $$\{S_x\subset M \text{ is a close set} | x\in M\}$$ with constants  $\mu\in(0,\frac{\varepsilon_0(\phi)}{3}),\delta>0$ such that for any $x\in M$, there is a continuous function $\tau:\bar{B}_{\delta}(x)\to[-\mu,\mu]$ with $\tau(0)=0$ and $\phi_{\tau(p)}(p)\in S_x$ for every $p\in\bar{B}_{\delta}(x)$. Moreover, for any $x\in M$ and $p\in B_{\delta}(x)$, there exist $-\mu\leq l_1<0<l_2\leq\mu$ such that $\phi_{l_1}(p)\in\partial B_\delta(x), \phi_{l_2}(p)\in\partial B_\delta(x), \phi_{(l_1, l_2)}(p)\subset B_\delta(x)$ and
for any $q\in\varphi_{[l_1, l_2]}(p)$,  we have $\phi_{\tau(q)}(q)=\phi_{\tau(p)}(p)$.
\end{Lemma}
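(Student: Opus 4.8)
The plan is to build the local cross-sections $S_x$ by a standard "time-averaging" trick adapted to the $C^0$ setting, following the idea of Whitney referenced in the statement. Fix $T_0\in(0,\varepsilon_0(\phi))$, let $\eta$ be the constant from Lemma \ref{lem2}, and take $\mu_1\in(0,T_0)$ and $\delta>0$ as produced by Lemma \ref{cla1}. For each $x\in M$ I would define a continuous real-valued functional on a neighborhood of $x$ of the form
$$
F_x(y)=\int_{0}^{T_0}\chi\bigl(d(\phi_s(y),x)\bigr)\,ds,
$$
where $\chi:[0,\infty)\to[0,1]$ is a fixed continuous bump function that equals $1$ on $[0,\eta/4]$ and vanishes outside $[0,\eta/2]$ (say). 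The point of conclusion $(1)$ of Lemma \ref{cla1} is that for $p\in\bar B_\delta(x)$ and $|t|\le\mu_1$ the piece of orbit $\phi_t(p)$ stays within $\eta/4$ of $x$, so $\chi$ is "saturated" near the base point; conclusion $(2)$ controls how $F_x$ changes along the flow on the time scale $[0,T_0]$; and conclusion $(3)$ guarantees that a time-$\mu_1/3$ displacement of $x$ leaves $\bar B_\delta(x)$. Then $S_x$ is defined to be the level set
$$
S_x=\{\,y\ :\ F_x(y)=c_x\,\}\quad\text{for a suitable constant }c_x,
$$
intersected with an appropriate neighborhood; this is closed by continuity of $F_x$.

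The key steps, in order, are: (i) verify that $t\mapsto F_x(\phi_t(p))$ is strictly monotone (in fact strictly decreasing, after choosing signs) for $p$ near $x$ and $t$ in a small interval $[-\mu,\mu]$ — this uses that along such an orbit segment the integrand gains mass near $t<0$ and loses it near $t>0$ at a definite rate, which is exactly what estimates $(1)$–$(3)$ are calibrated to give; (ii) deduce from strict monotonicity and the intermediate value theorem that there is a unique $\tau(p)\in[-\mu,\mu]$ with $F_x(\phi_{\tau(p)}(p))=c_x$, and hence $\phi_{\tau(p)}(p)\in S_x$, with $\tau(x)=0$; (iii) prove $\tau$ is continuous on $\bar B_\delta(x)$ by the implicit-function-type argument that uniqueness plus continuity of $F_x$ forces — if $p_n\to p$ but $\tau(p_n)\not\to\tau(p)$, pass to a subsequence $\tau(p_n)\to\tau'\ne\tau(p)$, get $F_x(\phi_{\tau'}(p))=c_x=F_x(\phi_{\tau(p)}(p))$, contradicting uniqueness; (iv) shrink $\delta$ if necessary so that $\phi_{[-\mu,\mu]}(\bar B_\delta(x))$ is contained in the region where the monotonicity holds, and establish the "moreover" clause: for $p\in B_\delta(x)$, continuity of $s\mapsto\phi_s(p)$ and $(3)$ give the exit times $l_1<0<l_2$ with $\phi_{l_1}(p),\phi_{l_2}(p)\in\partial B_\delta(x)$ and $\phi_{(l_1,l_2)}(p)\subset B_\delta(x)$; then for $q=\phi_r(p)$ with $r\in[l_1,l_2]$ the cocycle identity gives $\phi_{\tau(q)}(q)=\phi_{\tau(q)+r}(p)$, and since both $\phi_{\tau(q)}(q)$ and $\phi_{\tau(p)}(p)$ lie in $S_x$ with $\tau(q)+r$ and $\tau(p)$ both small, uniqueness of the hitting time forces $\tau(q)+r=\tau(p)$, i.e. $\phi_{\tau(q)}(q)=\phi_{\tau(p)}(p)$.

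The main obstacle I expect is step (i): in the purely topological category there is no vector field to differentiate, so "transversality" of $S_x$ to the flow must be extracted entirely from the quantitative estimates of Lemma \ref{cla1}. Concretely, one must show that moving from $\phi_t(p)$ to $\phi_{t+h}(p)$ (small $h>0$) strictly decreases $F_x$ by splitting $\int_0^{T_0}$ into $\int_0^{T_0-h}$ plus boundary terms and comparing with the shifted integral: the gain comes from the segment near the base point, where by $(1)$ the integrand is identically $1$ on a window of length comparable to $\mu_1$, while by $(3)$ the endpoints of that window have already exited $\bar B_\delta(x)$ so the integrand there is small — arranging the bump function $\chi$ and the constant $c_x$ so that these two effects do not cancel is the delicate part, and it is precisely why the three estimates in Lemma \ref{cla1} are stated with the particular constants $\eta/4$, $\eta\mu_1/(12T_0)$, and $2\delta$. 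Once monotonicity is in hand with a uniform (in $x$) rate, steps (ii)–(iv) are soft.
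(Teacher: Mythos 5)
Your plan is essentially the paper's: both build $S_x$ as a level set of a time average over a window $[0,T_0]$ of a function of $d(\phi_s(\cdot),x)$, extract strict monotonicity of that average along the flow from Lemma \ref{cla1} together with Lemma \ref{lem2}, and then obtain $\tau$ as the unique hitting time. The paper simply integrates the distance itself, $I(p,x)=\int_0^{T_0}d(\phi_s(p),x)\,ds$, rather than a bump of it. The step you flag as the main obstacle is in fact the easiest part, and it is settled by the same telescoping identity in either version: a change of variables gives
$$\frac{G(t+h,p)-G(t,p)}{h}=\frac{1}{h}\Bigl(\int_{t+T_0}^{t+T_0+h}d(\phi_s(p),x)\,ds-\int_{t}^{t+h}d(\phi_s(p),x)\,ds\Bigr),$$
so $\partial G/\partial t=d(\phi_{t+T_0}(p),x)-d(\phi_t(p),x)\geq d(\phi_{t+T_0}(p),\phi_t(p))-2d(\phi_t(p),x)>\eta-2\cdot\tfrac{\eta}{4}=\tfrac{\eta}{2}$ by Lemma \ref{lem2} and item $(1)$ of Lemma \ref{cla1}; with your bump function the same identity gives derivative $\chi(d(\phi_{t+T_0}(p),x))-\chi(d(\phi_t(p),x))=0-1=-1$ on the same range. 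There is no cancellation to arrange, and item $(3)$ of Lemma \ref{cla1} plays no role in monotonicity --- it is used only to produce the exit times in the ``moreover'' clause, exactly as you use it there. Your sequential-uniqueness argument for continuity of $\tau$ is a legitimate and somewhat softer alternative to the paper's contraction-mapping (Newton iteration) construction, and your treatment of the ``moreover'' clause coincides with the paper's.

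The one genuine gap is in step (ii): the intermediate value theorem produces $\tau(p)\in[-\mu,\mu]$ only if the endpoint values $F_x(\phi_{\pm\mu}(p))$ straddle $c_x=F_x(x)$, i.e.\ only if $|F_x(p)-F_x(x)|$ is at most $\mu$ times the monotonicity rate; nothing in your sketch supplies this. It is exactly what item $(2)$ of Lemma \ref{cla1} is calibrated for: $|I(p,x)-I(x,x)|\leq\int_0^{T_0}d(\phi_s(p),\phi_s(x))\,ds\leq\tfrac{\eta\mu_1}{12}$, which against the rate $\eta/2$ places the hitting time in $[-\mu_1/6,\mu_1/6]\subset[-\mu,\mu]$ with $\mu=\mu_1/3$. (In the paper this bound appears as the verification that the Newton iterate stays in $\mathcal{X}$, i.e.\ that $\rho(F\tau,0)<\mu$.) Without it, both the existence of $\tau(p)$ in the prescribed interval and the uniqueness you invoke in the ``moreover'' clause --- which needs $\tau(p)-t_1$ to stay inside $[-\mu_1,\mu_1]$, the interval on which monotonicity holds, and hence needs $\mu$ to be a definite fraction of $\mu_1$ --- are unjustified. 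Your proposal names the right hypotheses but assigns item $(2)$ the wrong job (``controls how $F_x$ changes along the flow''): it controls how $F_x$ changes transversally, as the base point ranges over $\bar{B}_\delta(x)$, and that is the estimate that closes the argument.
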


\begin{proof}
Fix a constant $T_0\in(0,\varepsilon_0(\phi))$. For any $x,p\in M$, we can define a function $I(p,x)$ as following: $$I(p,x)\triangleq\int^{T_0}_0d(\phi_{s}(p),x){\rm d}s.$$
Denote that $S_x=\{p\in M|I(p,x)-I(x,x)=0\}$, according to the continuity of $I(p,x)$, we know that $S_x$ is a closed subset in $M$. Fixed a point $x\in M$, we can define a function $G(t,p)$ as following:
$$G(t,p)\triangleq I(\phi_t(p),x)-I(x,x).$$
Now we prove that $G(t,p)$ is differentiable with respect to $t$. Let $t\in\mathbb{R}, p\in M, \Delta t\neq 0$ be given, we have
\begin{eqnarray*}
% \nonumber to remove numbering (before each equation)
  \frac{G(t+\Delta t,p)-G(t,p)}{\Delta t} &=& \frac{\int^{T_0}_0(d(\phi_{t+\Delta t+s}(p),x)-d(\phi_{t+s}(p),x)){\rm d}s}{\Delta t} \\
   &=& \frac{\int^{t+\Delta t+T_0}_{t+\Delta t} d(\phi_{s}(p),x){\rm d}s-\int^{t+T_0}_{t}d(\phi_{s}(p),x){\rm d}s}{\Delta t} \\
   &=& \frac{\int^{t+T_0+\Delta t}_{t+T_0}d(\phi_{s}(p),x){\rm d}s-\int^{t+\Delta t}_{t}d(\phi_{s}(p),x){\rm d}s}{\Delta t}
\end{eqnarray*}
Then by the integral mean value theorem and continuity of $\phi$, we know that
$$\frac{\partial G}{\partial t}(t,p)=\lim_{\Delta t\to0}\frac{G(t+\Delta t,p)-G(t,p)}{\Delta t}=d(\phi_{t+T_0}(p),x)-d(\phi_{t}(p),x).$$
Let $\eta$ be given as in Lemma \ref{lem2} and $\mu_1,\delta$ be the constants given in Lemma \ref{cla1} associated to $T_0$. By the choice of $\eta, \mu_1, \delta$, we know that $$d(\phi_{t+T_0}(p),x)-d(\phi_{t}(p),x)\geq d(\phi_{t+T_0}(p), \phi_t(p))-2d(\phi_{t}(p),x)>\eta- 2\cdot\frac{\eta}{4}=\frac{\eta}{2}$$ for any $x\in M,p\in\bar{B}_{\delta}(x)$ and $t\in[-\mu_1,\mu_1]$. That means $\frac{\partial G}{\partial t}(t,p)>\frac{\eta}{2}>0$ for any $x\in M,p\in\bar{B}_{\delta}(x),t\in[-\mu_1,\mu_1]$,
in other word, for any $x\in M,p\in\bar{B}_{\delta}(x)$, $G(t,p)$ is strictly monotone increasing with respect to $t\in[-\mu_1,\mu_1]$.

Now let $\mu=\mu_1/3$. Denote by $C(\bar{B}_{\delta}(x),\mathbb{R})$ the set of continuous functions defined on $\bar{B}_\delta(x)$ and $C(\bar{B}_{\delta}(x),[-\mu,\mu])$ the set of continuous functions $\tau:\bar{B}_\delta(x)\to[-\mu,\mu]$.   We consider the map $$F:\mathcal{X}\to C(\bar{B}_{\delta}(x),\mathbb{R})$$
$$(F\tau)(p)=\tau(p)-(\frac{\partial G}{\partial t}(0,x))^{-1}G(\tau(p),p),$$
where $\mathcal{X}=\{\tau\in C(\bar{B}_{\delta}(x),[-\mu,\mu])|\tau(x)=0\}$. Note that $\mathcal{X}\subset C(\bar{B}_{\delta}(x),\mathbb{R})$ is a closed subset in $C(\bar{B}_{\delta}(x),\mathbb{R})$, therefore $\mathcal{X}$ is complete metric space with respect to the metric $\rho$ in $C(\bar{B}_{\delta}(x),\mathbb{R})$, where $\rho$ is defined by $$\rho(\tau_1,\tau_2)=\max_{p\in\bar{B}_{\delta}(x)}|\tau_1(p)-\tau_2(p)|.$$
For any $\tau_1,\tau_2\in\mathcal{X}$, according to the mean value theorem of differentiation, we have
\begin{eqnarray*}
% \nonumber to remove numbering (before each equation)
  \rho(F\tau_1,F\tau_2) &=& \max_{p\in\bar{B}_{\delta}(x)}|\tau_1(p)-(\frac{\partial G}{\partial t}(0,x))^{-1}G(\tau_1(p),p)-\tau_2(p)+(\frac{\partial G}{\partial t}(0,x))^{-1}G(\tau_2(p),p)| \\
   &=& \max_{p\in\bar{B}_{\delta}(x)}|(\tau_1(p)-\tau_2(p))(1-(\frac{\partial G}{\partial t}(0,x))^{-1}\frac{\partial G}{\partial t}(\zeta,p))|,
\end{eqnarray*}
where  $\zeta\triangleq\zeta(p)=\theta(p)\tau_1(p)+(1-\theta(p))\tau_2(p)\in\mathcal{X}, \ \theta(p)\in[0,1], \ p\in\bar{B}_{\delta}(x)$. According to conclusions of Lemma \ref{cla1} and Lemma \ref{lem2}, we have
$$|d(\phi_{T_0}(x),x)-d(\phi_{\zeta+T_0}(p),x)|\leq d(\phi_{T_0}(x), \phi_{T_0}(p))+d(\phi_{T_0}(p), \phi_{\zeta+T_0}(p))\leq\frac{\eta\mu_1}{12T_0}+\frac{\eta}{4}<\frac{\eta}{3}$$
and then
\begin{eqnarray*}
% \nonumber to remove numbering (before each equation)
  \left|1-(\frac{\partial G}{\partial t}(0,x))^{-1}\frac{\partial G}{\partial t}(\zeta,p)\right| &=& \left|1-\frac{d(\phi_{\zeta+T_0}(p),x)-d(\phi_t(p),x)}{d(\phi_{T_0}(x),x)}\right| \\
   &=& \left|\frac{d(\phi_{T_0}(x),x)-d(\phi_{\zeta+T_0}(p),x)+d(\phi_t(p),x)}{d(\phi_{T_0}(x),x)}\right| \\
   &\leq& \frac{|d(\phi_{T_0}(x),x)-d(\phi_{\zeta+T_0}(p),x)|}{d(\phi_{T_0}(x),x)}+\frac{d(\phi_t(p),x)}{d(\phi_{T_0}(x),x)} \\
   &\leq& \frac{\frac{\eta}{3}}{\eta}+\frac{\frac{\eta}{4}}{\eta}=\frac{7}{12}.
\end{eqnarray*}
Now we know that $$\rho(F\tau_1,F\tau_2)\leq\frac{7}{12}\rho(\tau_1,\tau_2),$$
hence $F$ is a contraction map. In the following we verify that $F$ is a mapping from $\mathcal{X}$ to itself. For any function $\tau\in\mathcal{X}$, we have
$$F\tau(x)=\tau(x)-(\frac{\partial G}{\partial t}(0,x))^{-1}G(\tau(x),x)=0,$$
and
\begin{eqnarray*}
% \nonumber to remove numbering (before each equation)
  \rho(F\tau,0) &\leq& \rho(F\tau,F0)+\rho(F0,0) \\
   &\leq& \frac{7}{12}\rho(\tau,0)+\max_{p\in\bar{B}_{\delta}(x)}|(\frac{\partial G}{\partial t}(0,x))^{-1}G(0,p)| \\
   &=& \frac{7}{12}\rho(\tau,0)+\max_{p\in\bar{B}_{\delta}(x)}\frac{|\int_0^{T_0}(d(\phi_{s}(p),x)-d(\phi_{s}(x),x)){\rm d}s|}{d(\phi_{T_0}(x),x)} \\
   &\leq& \frac{7}{12}\rho(\tau,0)+\max_{p\in\bar{B}_{\delta}(x)}\frac{|\int_0^{T_0}d(\phi_{s}(p),\phi_{s}(x)){\rm d}s|}{\eta} \\
   &\leq& \frac{7}{12}\rho(\tau,0)+\frac{\mu_1\eta T_0}{12\eta T_0}\leq\frac{7}{12}\mu+\frac{1}{4}\mu<\mu.
\end{eqnarray*}
So $F(\mathcal{X})\subset\mathcal{X}$, and $F:\mathcal{X}\to\mathcal{X}$ is a contraction mapping. By the contraction mapping theorem we know that there exists an unique continuous function $\tau\in\mathcal{X}$ such that $G(\tau(p),p)=0$ for any $x\in M, p\in\bar{B}_{\delta}(x)$. It is equivalent to $\phi_{\tau(p)}(p)\in S_x$ for any $x\in M, p\in\bar{B}_{\delta}(x)$.

No we prove that given any $x\in M$ and $p\in B_{\delta}(x)$, there exist $-\mu\leq l_1<0<l_2\leq\mu$ such that $\phi_{l_1}(p)\in\bar{B}_\delta(x), \phi_{l_2}(p)\in\bar{B}_\delta(x), \phi_{(l_1, l_2)}(p)\subset B_\delta(x)$ and
for any $q\in\phi_{[l_1, l_2]}(p)$,  we have $\phi_{\tau(q)}(q)=\phi_{\tau(p)}(p)$.

Let $x\in M$ and $p\in B_{\delta}(x)$ be given, denote by $$l_1=\sup\{t<0|\phi_t(p)\in\partial B_{\delta}(x)\},$$ $$l_2=\inf\{t>0|\phi_t(p)\in\partial B_{\delta}(x)\}.$$ By the choice of $\delta$, we have $d(\phi_{\pm\mu}(p), x)>2\delta$ and then $\phi_{\pm\mu}(p)\notin\bar{B}_\delta(x)$, thus $l_1,l_2$ exist and $-\mu<l_1<0<l_2<\mu$. It is easy to check that $\phi_{[l_1,l_2]}(p)\subset\bar{B}_{\delta}(x)$. For any $q\in\phi_{[l_1,l_2]}(p)$, without loss of generality, let $q=\phi_{t_1}(p)$, then we have $p=\phi_{-t_1}(q)$, where $t_1\in[-\mu,\mu]$. Since $p,q\in B_{\delta}(x)$, there exist $\tau(p), \tau(q)\in[-\mu,\mu]$ such that $\phi_{\tau(p)}(p)\in S_x, \phi_{\tau(q)}(q)\in S_x$.  Note that
$$\phi_{\tau(p)}(p)=\phi_{\tau(p)}(\phi_{-t_1}(q))=\phi_{\tau(p)-t_1}(q)$$
Hence both $\tau(p)-t_1$ and $\tau(q)$ are real numbers $s$ in $[-\mu_1,\mu_1]$ with properties $\phi_s(q)\in S_x$. Note that $\phi_s(q)\in S_x$ is equivalent to $G(s, q)=0$. We have already known that $G(s, q)$ is strictly increasing with respect to $s\in[-\mu_1,\mu_1]$, hence there is at most one real number $s\in[-\mu_1, \mu_1]$ such that $\phi_s(q)\in S_x$. Thus we have $\tau(p)-t_1=\tau(q)$, and then $\phi_{\tau(q)}(q)=\phi_{\tau(p)}(p)$.
\end{proof}

\begin{Lemma}\label{lem4}
Let $M$ be a metric space and $\phi_t$ be a fixed point-free flow on $M$. There exist constants $\mu\in(0, \frac{\varepsilon_0(\phi)}{3}), \delta>0$ such that for any $x\in M$, and any continuous curve $\alpha:[0,1]\to B_{\delta}(x)$ with $\alpha(0)=x$ and $\alpha(t)\in Orb(x, \phi)$ for all $t\in[0,1]$, we have $\alpha(t)\in\phi_{[-\mu,\mu]}(x)$ for all $t\in[0,1]$.
\end{Lemma}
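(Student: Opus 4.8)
The plan is to use the cross-section structure of Lemma \ref{lem3}, taking $\mu$ and $\delta$ to be exactly the constants produced there (so in particular $\mu\in(0,\varepsilon_0(\phi)/3)$, as the statement requires). Let $\tau_x\colon\bar{B}_{\delta}(x)\to[-\mu,\mu]$ be the associated continuous time-function, so that $\tau_x(x)=0$ and $\phi_{\tau_x(q)}(q)\in S_x$ for every $q\in\bar{B}_{\delta}(x)$, and let $\pi_x(q)=\phi_{\tau_x(q)}(q)$ be the induced projection onto the section; it is continuous on $\bar{B}_{\delta}(x)$ and $\pi_x(x)=\phi_0(x)=x$. The property I will really use is the last clause of Lemma \ref{lem3}: $\pi_x$ is constant along each maximal subarc of $Orb(x,\phi)$ contained in $B_\delta(x)$.

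First I would turn that clause into a countability statement. Write $\gamma(s)=\phi_s(x)$; then $\gamma^{-1}(B_\delta(x))$ is an open subset of $\mathbb{R}$, hence a countable disjoint union $\bigsqcup_{m}(a_m,b_m)$ of open intervals. Fix $m$, pick $s_0\in(a_m,b_m)$, and apply the ``moreover'' clause of Lemma \ref{lem3} to $p=\gamma(s_0)\in B_\delta(x)$: the resulting $l_1<0<l_2$ must satisfy $s_0+l_1=a_m$ and $s_0+l_2=b_m$, because $\phi_{(l_1,l_2)}(p)\subset B_\delta(x)$ forces $(s_0+l_1,s_0+l_2)\subseteq(a_m,b_m)$ while $\phi_{l_i}(p)\in\partial B_\delta(x)$ keeps $s_0+l_i$ outside the open ball. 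Hence $\phi_{[l_1,l_2]}(p)=\gamma([a_m,b_m])$, so $\pi_x\circ\gamma$ is constant on $(a_m,b_m)$ with some value $c_m$. As $m$ runs over a countable set, $\pi_x\big(Orb(x,\phi)\cap B_\delta(x)\big)\subseteq\{c_m:m\}$ is countable.

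The rest is a one-line connectedness argument. Consider $\beta=\pi_x\circ\alpha\colon[0,1]\to M$: it is continuous, $\beta(0)=\pi_x(x)=x$, and for every $t$ the point $\alpha(t)$ lies in $Orb(x,\phi)\cap B_\delta(x)$, so $\beta(t)\in\{c_m:m\}$. Thus $\beta([0,1])$ is a connected and countable subset of the metric space $M$, hence a single point, so $\beta\equiv x$. This says $\phi_{\tau_x(\alpha(t))}(\alpha(t))=x$, i.e. $\alpha(t)=\phi_{-\tau_x(\alpha(t))}(x)$ with $-\tau_x(\alpha(t))\in[-\mu,\mu]$, which is exactly the assertion $\alpha(t)\in\phi_{[-\mu,\mu]}(x)$.

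The step I expect to be the main obstacle is the passage from Lemma \ref{lem3} to the countability of $\pi_x\big(Orb(x,\phi)\cap B_\delta(x)\big)$. One must not assume the arcs $\gamma((a_m,b_m))$ are ``separated'' inside $B_\delta(x)$: the flow may be recurrent, so they can accumulate on one another, and when $x$ is periodic several of them coincide as subsets of $M$. The correct viewpoint is that Lemma \ref{lem3} supplies the local product structure one maximal arc at a time, which is precisely what the identity $\phi_{[l_1,l_2]}(p)=\gamma([a_m,b_m])$ records; once this is in hand the argument uses nothing else, and in particular never needs $\gamma$ to be injective, so periodic orbits need no separate discussion.
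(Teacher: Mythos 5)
Your proposal is correct and follows essentially the same route as the paper's proof: project onto the cross-section $S_x$ of Lemma \ref{lem3} via $q\mapsto\phi_{\tau(q)}(q)$, use the ``moreover'' clause to see that each maximal arc of $Orb(x,\phi)$ in $B_\delta(x)$ projects to a single point so that the image of $Orb(x,\phi)\cap B_\delta(x)$ is countable, and then conclude by connectedness that $P_x\circ\alpha$ is constant equal to $x$. Your intermediate verification that $s_0+l_1=a_m$ and $s_0+l_2=b_m$ is a correct (and slightly more explicit) justification of the step the paper states without detail.
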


\begin{proof}
Let $\mu,\delta$ and the family $\{S_x\}$ be given as above Lemma \ref{lem3}. Let $x\in M$ and $\alpha:[0,1]\to B_{\delta}(x)$ be a continuous curve in $M$ with $\alpha(0)=x$ and $\alpha(t)\in Orb(x, \phi)$ for all $t\in[0,1]$. Let $\tau:\bar{B}_\delta(x)\to[-\mu,\mu]$ be the continuous function with $\phi_{\tau(p)}(p)\in S_x$ for any $p\in\bar{B}_\delta(x)$. Define a continuous map \begin{eqnarray*}
% \nonumber to remove numbering (before each equation)
  P_x:B_{\delta}(x) &\to& S_x \\
  p &\mapsto& \phi_{\tau(p)}(p).
\end{eqnarray*}
We can see that $P_x\circ\alpha$ is a continuous curve on $S_x$ and $P_x\circ\alpha(0)=x$. Note that $$\{t\in\mathbb{R}|\phi_t(x)\in B_{\delta}(x)\}$$
is an open set in $\mathbb{R}$, therefore it is a union of countable open intervals $\{I_j\}$. For each interval $I_j=[l_1, l_2]$, we know that $\phi_{l_1}(x), \phi_{l_2}(x)\in \partial B_\delta(x)$ and $\phi_{(l_1, l_2)}(x)\subset B_\delta(x)$, by the conclusion of Lemma \ref{lem3}, we know each
$P_x(\phi_{I_j}(x))$ is a single point in $S_x$. Now we know that the cardinal number of $P_x\circ\alpha([0,1])$ is at
mostly countable. If there is $t\in[0,1]$ such that $P_x\circ\alpha(t)\neq x$, then $d(P_x\circ\alpha(t), x)$ formulate an interval in $\mathbb{R}$, contradicts with that $P_x\circ\alpha([0,1])$ is at mostly countable. Hence $P_x\circ\alpha([0,1])=\{x\}$. Then we know that $\alpha(t)=\phi_{-\tau(\alpha(t))}(x)$ for any $t\in[0,1]$. We know that $-\tau(\alpha(t))\in[-\mu,\mu]$, hence $\alpha(t)\in\phi_{[-\mu,\mu]}(x)$ for any $t\in[0,1]$. This proves the lemma.
\end{proof}

\begin{Lemma}\label{lem5}
Let $M$ be a compact metric space with metric $d$, $\phi:\mathbb{R}\times M\to M$ be a continuous flow without fixed points. Suppose $\phi$ is separating. Given any $\psi\in\mathcal{Z}(\phi)$, there exist a constant $a>0$ and a function $z:[-a,a]\times M\to [-\mu,\mu]$ such that $\psi_s(x)=\phi_{z(s, x)}(x)$ for every $(s, x)\in
[-a,a]\times M$. Moreover,
\begin{enumerate}
\item $z$ is continuous;
\item $z(t+s, x)=z(t,x)+z(s, \psi_t(x))$ for any $x\in M$ and $t,s\in[-a,a]$ with $t+s\in[-a,a]$;
\item $z(s, \phi_t(x))=z(s, x)$ for any $x\in M$ and $s\in[-a,a]$ and $t\in\mathbb{R}$;
\item $z(s, x)=A(x)s$ for any $x\in M$ and $s\in[-a,a]$, where $A(x)=a^{-1}z(a,x)$.
\end{enumerate}
\end{Lemma}

\begin{proof}
Let $\mu\in(0, \frac{\varepsilon_0(\phi)}{3}), \delta>0$ be given as in Lemma \ref{lem4}. Since $\phi_t$ is separating, without loss of generality, we can assume that $\delta>$ is also chosen small enough such that for any $x,y\in M$, if $d(\phi_t(y), \phi_t(x))<\delta$ for all $t\in\mathbb{R}$ hold, then $y\in Orb(x,\phi)$.

Let $\psi\in\mathcal{Z}(\phi)$ be given. Since $\psi$ is a continuous flow on $M$, according to compactness of $M$, there exists a constant $a>0$ such that
$$d(\psi_s(x),x)<\delta$$
for all $(s,x)\in[-a,a]\times M$. By the assumption that $\psi\in\mathcal{Z}(\phi)$, one has $$d(\phi_t(x),\phi_t(\psi_s(x)))=d(\phi_t(x),\psi_s(\phi_t(x)))<\delta$$
for all $s\in[-a,a],t\in\mathbb{R},x\in M$. Since $\phi$ is separating, we have $\psi_s(x)\in Orb(\phi_t(x),\phi)$. Note that $\psi_s(x)\in B_{\delta}(x)$ for all $s\in[-a,a]$ and $\psi_0(x)=x$. By Lemma \ref{lem4}, there exists $\eta=z(s,x)\in[-\mu,\mu]$ such that $\psi_s(x)=\phi_{\eta}(x)$. Because of $\mu\in(0, \frac{\varepsilon_0(\phi)}{3})$, we know that for any $s_1,s_2\in[-\mu,\mu]$, when $s_1\neq s_2$, then $\phi_{s_1}(x)\neq\phi_{s_2}(x)$ for any $x\in M$. Hence $\eta=z(s,x)$ is uniquely determined for every $(s,x)\in[-a,a]\times M$. We define a function $\eta=z(s,x)$ on $[-a,a]\times M$.

We now proceed to demonstrate the continuity of $z(s,x)$. Let $\{(s_n,x_n)\in[-a,a]\times M\}$ be a sequence with $(s_n, x_n)\to (s_0, x_0)\in[-a,a]\times M$ as $n\to\infty$. By definition, we have
$$\phi_{z(s_n,x_n)}(x_n)=\psi_{s_n}(x_n),$$
$$\phi_{z(s_0,x_0)}(x_0)=\psi_{s_0}(x_0).$$
Since $\psi$ and $\phi$ are continuous flows, we have $$\lim\limits_{n\to\infty}\phi_{z(s_n,x_n)}(x_n)=\lim\limits_{n\to\infty}\psi_{s_n}(x_n)=\psi_{s_0}(x_0)=\phi_{z(s_0,x_0)}(x_0).$$
Suppose that $\lim\limits_{n\to\infty}z(s_n,x_n)=\eta_0\neq z(s_0,x_0)$. Then, we have $\phi_{\eta_0}(x_0)=\phi_{z(s_0,x_0)}(x_0)$, which implies that $x_0$ is a period point with period $z(s_0,x_0)-\eta_0\in[-2\mu,2\mu]\subset(-\varepsilon_0(\phi), \varepsilon_0(\phi))$.
This contradicts with the choice of $\varepsilon_0(\phi)$. Therefore, we conclude that $z(s,x)$ is continuous.

Note that
\begin{eqnarray*}
% \nonumber to remove numbering (before each equation)
  \phi_{z(t+s,x)}(x) &=& \psi_{t+s}(x)=\psi_s(\psi_t(x))=\phi_{z(s, \psi_t(x))}(\psi_t(x)) \\
   &=& \phi_{z(s, \psi_t(x))}(\phi_{z(t,x)}(x))=\phi_{z(t,x)+z(s,\psi_t(x))}(x),
\end{eqnarray*}
and then  $\phi_{z(t+s,x)-(z(t,x)+z(s,\psi_t(x)))}(x)=0$. It is easy to see that $z(t+s, x)-(z(t,x)+z(s,\psi_t(x)))\in[-3\mu, 3\mu]\subset(-\varepsilon_0(\phi), \varepsilon_0(\phi))$, then by the choice of $\varepsilon_0(\phi)$ we can see that $z(t+s,x)=z(t,x)+z(s,\psi_t(x))$ for any $x\in M$ and $t,s\in[-a,a]$ with $t+s\in[-a,a]$. This proves item 2 of the lemma.

Fix $s\in[-a,a]$ and $t\in[-\mu,\mu]$ and $x\in M$. Note that
\begin{eqnarray*}
% \nonumber to remove numbering (before each equation)
  \phi_{t+z(s, \phi_t(x))}(x)&=&\phi_{z(s, \phi_t(x))}(\phi_t(x))=\psi_s(\phi_t(x)) \\
   &=& \phi_t(\psi_s(x))=\phi_t(\phi_{z(s,x)}(x))=\phi_{t+z(s, x)}(x),
\end{eqnarray*}
and then $\phi_{z(s, \phi_t(x))-z(s, x)}(x)=x$.
It is easy to check that $z(s, \phi_t(x))-z(s, x)\in[-2\mu,2\mu]\subset(-\varepsilon_0(\phi), \varepsilon_0(\phi))$ we know that $z(s, \phi_t(x))-z(s, x)=0$ and then $z(s, \phi_t(x))=z(s,x)$.

Let $s\in[-a,a]$ and $t\in\mathbb{R}$ and $x\in M$ be given. We can find $n\in\mathbb{N}$ big enough such that $|n^{-1}t|\leq\mu$, then we can see that
$$z(s, \phi_{\frac{1}{n}t}(x))=z(s,x).$$
For $\phi_{\frac{1}{n}t}(x)$, also have
$$z(s, \phi_{\frac{2}{n}t}(x))=z(s,\phi_{\frac{1}{n}t}(\phi_{\frac{1}{n}t}(x)))=z(s,\phi_{\frac{1}{n}t}(x))$$
Inductively, we have
$$z(s,\phi_t(x))=z(s,\phi_{\frac{n-1}{n}t}(x))=\cdots=z(s,\phi_{\frac{1}{n}t}(x))=z(s,x)$$
This proves item 3 of the lemma.

From item 2 and 3 we can see that for any $s, t\in[-a,a]$ and any $x\in M$, we have
$$z(s+t, x)=z(t, x)+z(s, \psi_t(x))=z(t, x)+z(s, \phi_{z(t, x)}(x))=z(t, x)+z(s, x).$$
Fix $x\in M$. For any $n\in\mathbb{Z}^+$, we have $nz(n^{-1}a, x)=z(a, x)$, and then we have $$z(n^{-1}a, x)=n^{-1}z(a, x)=aA(x).$$ And then we have $$z(\frac{m}{n}a, x)=\frac{m}{n}a A(x)$$ for any rational number $\frac{m}{n}\in[0, 1]$. Note that $z(-t, x)=-z(t, x)$, we can see that $$z(\frac{m}{n}a, x)=\frac{m}{n}a A(x)$$ for any rational number $\frac{m}{n}\in[-1,1]$, by the continuity of $z(s, x)$ we can see that $$z(s, x)=A(x)s$$ for any $s\in[-a,a]$. This prove item 4.
\end{proof}

We now proceed to prove Theorem A.

\bigskip

{\noindent\it  Proof of Theorem A.}
Let $\psi\in\mathcal{Z}(\phi)$ be given, that is, $\psi_t$ is a continuous flow commute with $\phi_t$. Then we can take $a>0$ and a function $z(s, x)=A(x)s$ as in Lemma \ref{lem5}.
By the continuity of $z(s, x)$ we can see that $A(x)$ is continuous on $M$. Let $x\in M$ and $t\in\mathbb{R}$ be given, we have
$$A(\phi_t(x))=a^{-1}z(a, \phi_t(x))=a^{-1}z(a, x)=A(x).$$
Hence $A(x)$ is constant along orbit of $\phi_t$.

By the result of Lemma \ref{lem5} that $\psi_t(x)=\phi_{z(t,x)}(x)$ for any $(t,x)\in[-a,a]\times M$ we can easily establish that $\psi_t(x)=\phi_{A(x)t}(x)$ holds for any $x\in M$ and $t\in[-a,a]$.
We now proceed to extend $t\in[-a,a]$ to the entire real number line. Fixed any $x\in M$ and $t\in\mathbb{R}$, we can take $n\in\mathbb{N}$ big enough such that $|n^{-1}t|\leq a$. Denote by $\tau=n^{-1}t$, then we have
$$\psi_{2\tau}(x)=\psi_\tau(\psi_\tau(x))=\phi_{A(\psi_{\tau}(x))\tau}(\phi_{A(x)\tau}(x))=\phi_{A(\phi_{A(x)\tau}(x))\tau}(\phi_{A(x)\tau}(x))=\phi_{2A(x)\tau}(x),$$
and then by induction we have $$\psi_t(x)=\psi_{n\tau}(x)=\phi_{nA(x)\tau}(x)=\phi_{A(x)t}(x).$$
Therefore, we have established that $\psi_t(x)=\phi_{A(x)t}(x)$ for any $x\in M$ and $t\in\mathbb{R}$. This concludes the proof of Theorem A.
\hfill\qedsymbol

\bigskip

Corollary \ref{cor1.3} follows directly from Theorem A.
\bigskip

\noindent{\it Proof of Corollary \ref{cor1.3}.} Suppose that there exists $v\in\mathbb{R}^d$ such that $(\Phi_{tv})_{t\in\mathbb{R}}$ is a fixed point free separating flow. Let $\phi_t(x)=\Phi(tv, x)$ for any $t\in\mathbb{R}$ and $x\in M$. For any $u\in\mathbb{R}^d$, let $\psi_t=\Phi_{tu}$.  Note that $$\phi_t\circ\psi_s=\Phi_{tu}\circ\Phi_{sv}=\Phi_{tu+sv}=\Phi_{sv+tu}=\Phi_{sv}\circ\Phi_{tu}=\psi_s\circ\phi_t$$ holds for any $t,s\in\mathbb{R}$.
It follows that $\psi_t$ commutes with $\phi_t$. As a consequence of Theorem A, there exists a continuous function $A:M\to\mathbb{R}$ which is invariant along the orbits of $\phi_t$ and satisfies $\psi_t(x)=\Phi(tu,x)=\phi_{A(x)t}(x)$ for all $x\in M$ and $t\in\mathbb{R}$. Thus, we have $\Phi(tu, x)\in\{\phi_t(x)| t\in\mathbb{R}\}$ for any $t\in\mathbb{R}$. Therefore, the orbit of $\Phi$ along $x$ coincides with the orbit of $\phi_t$ along $x$ in $M$. This completes the proof of the corollary.
\hfill\qedsymbol

\section{Centralizer for $C^1$ separating homogenous $\mathbb{R}^d$-actions}

This section focuses on separating homogeneous $\mathbb{R}^d$-actions. Let $\Phi_v$ denote a homogeneous $\mathbb{R}^d$-action. Similar to Lemma \ref{lem1}, we establish the following lemma for $\Phi_v$.

\begin{Lemma}\label{lem3.1}
Let $\Phi$ be a homogenous $\mathbb{R}^d$-action, then
$$\varepsilon_0(\Phi)=\inf(\{\|v\||0\neq v\in\mathbb{R}^d, \text{there exists }x\in M,\text{ such that } \Phi_v(x)=x\}\cup\{1\})>0.$$
\end{Lemma}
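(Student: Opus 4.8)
The plan is to mimic the proof of Lemma~\ref{lem1}, replacing the ``rational rotation'' argument there (dividing a fixed $t$ into small steps of length $t_n$) with a lattice-covering argument in $\mathbb{R}^d$. Suppose for contradiction that $\varepsilon_0(\Phi)=0$. Then for each $n$ we find $0\neq v_n\in\mathbb{R}^d$ with $\|v_n\|<1/n$ and a point $x_n\in M$ with $\Phi_{v_n}(x_n)=x_n$. By compactness of $M$ pass to a subsequence so that $x_{n_k}\to x_0\in M$; note also $v_{n_k}\to 0$. The stabilizer $\mathrm{Stab}(x_{n_k})=\{w\in\mathbb{R}^d:\Phi_w(x_{n_k})=x_{n_k}\}$ is a closed subgroup of $\mathbb{R}^d$ containing the nonzero vector $v_{n_k}$, hence contains the full cyclic group $\mathbb{Z}v_{n_k}$, whose points are $1/n_k$-dense along the line $\mathbb{R}v_{n_k}$.

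The key step is to upgrade this one-dimensional density into density in all of $\mathbb{R}^d$, so as to conclude $\Phi_w(x_0)=x_0$ for every $w\in\mathbb{R}^d$, contradicting homogeneity (which forces the orbit of $x_0$ to be a $d$-dimensional immersed submanifold, in particular not a point). This is where I expect the main obstacle: a single fixed vector $v_n$ only gives periodicity in one direction, not in $d$ independent directions. The homogeneity hypothesis must be used here. I would argue as follows: by homogeneity the orbit map $w\mapsto\Phi_w(x_{n_k})$ is a $C^1$ immersion near $0$, so there is a uniform $r>0$ (uniform in $k$, by compactness of $M$ and continuity of the $C^1$-action, shrinking if necessary) such that $\Phi$ restricted to $B_r(0)\subset\mathbb{R}^d$ is injective on the orbit of each point — equivalently, $\mathrm{Stab}(y)\cap B_r(0)=\{0\}$ for every $y\in M$. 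But $\mathrm{Stab}(x_{n_k})\ni v_{n_k}$ with $\|v_{n_k}\|<1/n_k<r$ for $k$ large, and $v_{n_k}\neq 0$: this is already the contradiction, with no density argument needed.

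So the cleaner route, and the one I would write up, is: (1) invoke homogeneity to get, for each $y\in M$, a neighborhood $U_y$ of $0$ in $\mathbb{R}^d$ on which $w\mapsto\Phi_w(y)$ is injective; (2) use compactness of $M$ plus continuity of the $C^1$-action $\Phi:\mathbb{R}^d\times M\to M$ (and of its derivative) to extract a \emph{uniform} radius $r>0$ with $\mathrm{Stab}(y)\cap B_r(0)=\{0\}$ for all $y\in M$ — this is the technical heart, proved by a contradiction/compactness argument exactly parallel to Lemma~\ref{lem2} and Lemma~\ref{cla1}; (3) observe that $\varepsilon_0(\Phi)\geq\min(r,1)>0$, since any $v$ realizing a periodic point has $\|v\|\geq r$. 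The main obstacle is step~(2): making the injectivity radius uniform over $M$. One handles it by supposing there are $y_k\in M$ and $0\neq w_k\in\mathbb{R}^d$ with $\|w_k\|\to 0$ and $\Phi_{w_k}(y_k)=y_k$, passing to $y_k\to y_0$, and deriving that $D_0(w\mapsto\Phi_w(y_0))$ is singular (since the difference quotient along $w_k/\|w_k\|$ tends to $0$), contradicting the immersion property at $y_0$. This last deduction needs the $C^1$ dependence of $\Phi$ on both variables, which is available by hypothesis.
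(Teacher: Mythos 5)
Your final argument is correct, but it follows a genuinely different route from the paper's, and the ``obstacle'' that pushed you off your first route is not actually there. The paper carries out precisely the lattice idea of your opening paragraph: with $u_n/\|u_n\|\to u$ and $x_n\to x$, every integer multiple $[t/\|u_n\|]\,u_n$ lies in the stabilizer of $x_n$ and converges to $tu$, so continuity of $\Phi$ gives $\Phi(tu,x)=x$ for all $t\in\mathbb{R}$. Contrary to your concern, periodicity along the single line $\mathbb{R}u$ already contradicts homogeneity: it forces $X_u(x)=\sum_i u_iX_i(x)=0$ with $u\neq 0$, so $\{X_1(x),\dots,X_d(x)\}$ is linearly dependent and $Orb(x,\Phi)$ cannot be a $d$-dimensional immersed submanifold; no density in all of $\mathbb{R}^d$ is required. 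Your substitute --- a uniform injectivity radius for the stabilizers, obtained by noting that $\Phi_{w_k}(y_k)=y_k$ with $0\neq\|w_k\|\to 0$ forces the derivative at $0$ of $w\mapsto\Phi_w(y_0)$ to annihilate $u=\lim w_k/\|w_k\|$ --- is also sound and lands on the same contradiction with the immersion property. The trade-off is that the paper's limit argument uses only continuity of $\Phi$ until the very last step, whereas yours genuinely needs the $C^1$ structure: you must pass to a further subsequence so that $w_k/\|w_k\|$ converges, and you need the first-order Taylor estimate for $w\mapsto\Phi_w(y)$ at $w=0$ (computed in charts) to be uniform in the base point $y$, which does follow from compactness of $M$ and continuity of $D\Phi$ but deserves an explicit sentence in a full write-up. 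The paper's version is marginally more economical; yours makes the role of the immersion hypothesis more transparent and avoids the floor-function bookkeeping.
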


\begin{proof}
Suppose that $\varepsilon_0(\Phi)=0$. By definition, there exist sequences $x_n\in M$ and $u_n\in\mathbb{R}^d$ with $\|u_n\|\to 0$ such that $$\Phi_{u_n}(x_n)=x_n,$$
for any $n=1, 2, \cdots$. Without loss of generality, we may assume that $\frac{u_n}{\|u_n\|}\to u\in\mathbb{R}^d$ and $x_n\to x\in M$ by choosing subsequences.  For any $t\in\mathbb{R}$, we have
$$-\|u_n\|+|t|\cdot\|\frac{u_n}{\|u_n\|}-u\|\leq\|[\frac{t}{\|u_n\|}]u_n-tu\|\leq \|u_n\|+|t|\cdot\|\frac{u_n}{\|u_n\|}-u\|,$$
hence we obtain $[\frac{t}{\|u_n\|}]u_n\to tu$ as $n\to\infty$. Consequently, we have
$$\Phi(tu, x)=\lim_{n\to\infty}\Phi([\frac{t}{\|u_n\|}]u_n, x_n)=\lim_{n\to\infty}x_n=x.$$
This contradicts the homogeneity of $\Phi$. Therefore, we conclude that $\varepsilon_0(\Phi)>0$.
\end{proof}

Now we consider a smooth  homogenous $\mathbb{R}^d$-action $\Phi:\mathbb{R}^d\times M\to M$. Let $\{e_1,...,e_d\}$ be the standard basis of $\mathbb{R}^d$, where $$e_1=(1,0,0,\cdots, 0), e_2=(0,1,0,\cdots,0), \cdots, e_d=(0,0,0,\cdots, 1).$$  For any $i=1,2,\cdots, d$, denote by $\phi_{e_i}:=(\Phi_{te_i})_{t\in\mathbb{R}}$ the canonical flow generated by the direction $e_i$, that is, $\phi_{e_i}$ is defined as
\begin{eqnarray*}
% \nonumber to remove numbering (before each equation)
  \phi_{e_i}:\mathbb{R}\times M &\to& M \\
  (t,x) &\mapsto& \Phi(te_i,x).
\end{eqnarray*}
Denote by $X_i=X_{e_i}$ the vector field associated to the flow $\phi_{e_i}$, that is, $$X_{i}(x)=\frac{\rm d}{{\rm d}t}\phi_{e_i}(t,x)|_{t=0}$$ for any $x\in M$. By the homogeneity of $\Phi$ we know that $Orb(x, \Phi)=\{\Phi_v(x)| v\in\mathbb{R}^d\}$ is an $d$-dimensional immersed sub-manifold of $M$ for every $x\in M$, thus $\{X_{1}(x), X_{2}(x), \cdots, X_{d}(x)\}$ is a linear basis of $T_xOrb(x,\Phi)\subset T_xM$.

Denote by $V_x=T_xOrb(x, \Phi)$ and $N_x$ the orthogonal complement of $V_x$ in $T_xM$. For any $\xi\in T_x M$, we have a unique decomposition
$$\xi=\zeta+a_1X_{1}(x)+a_2X_{2}(x)+\cdots +a_dX_{d}(x),$$
where $\zeta\in N_x$ and $(a_1, a_2, \cdots, a_d)\in\mathbb{R}^d$.

By the compactness of $M$, there is $\rho_0>0$ such that for any $\zeta\in N_x$ with $\|\zeta\|\leq \rho_0$ and $(a_1, a_2, \cdots, a_d)\in\mathbb{R}^d$, we can set
$$F_x(\zeta+a_1X_{1}(x)+a_2X_{2}(x)+\cdots +a_dX_{d}(x))=\Phi((a_1,a_2,\cdots,a_d),\exp_x\zeta).$$

Denote by $T_xM(r)=\{u\in T_xM| \|u\|<r\}$ and $B_r(x)=\exp_x(T_xM(r))$ for $r>0$. Denote by $O_\mu=\{v\in\mathbb{R}^d|\|v\|<\mu\}$ and $\bar{O}_\mu$ be the closure of $O_\mu$ for $\mu>0$. As usual, for a linear isomorphism $A$ from a Banach space $E$ to a  Banach space $F$, denote by
$$m(A)=\inf_{\xi\in E, \|\xi\|=1}\|A\xi\|.$$
We have the following lemma similar to the flowbox theorem.

\begin{Lemma}\label{lem7}
Let $M$ be a compact Riemannian manifold without boundary, $\Phi:\mathbb{R}^d\times M\to M$ be a $C^1$ homogeneous $\mathbb{R}^d$-action, $0<d\leq\dim M$. There is $r_0>0$ such that for any $x\in M$, $F_x:T_xM(r_0)\to M$ is an embedding and $m(D_pF_x)\geq1/3$ and $\|D_pF_x\|\leq3$ for every $p\in T_xM(r_0)$.
\end{Lemma}
\begin{proof}
Since $M$ is compact, we can choose $\rho_0>0$ such that for any $x\in M$, $\exp_x: T_xM(\rho_0)\to M$ is an embedding with
$$\|D_\xi\exp_x\|<\frac{3}{2},\ \ \ \ \ \  m(D_\xi\exp_x)>\frac{2}{3}$$
for any $\xi\in T_xM(\rho_0)$. Then we can take $0<\rho_1<\rho_0$ such that for any $x\in M$ and any $\xi\in T_xM(\rho_1)$ and any $v\in\mathbb{R}^d$ with $\|v\|<\rho_1$, $\Phi_{v}(\exp_x(\xi))\in B_{\rho_0}(x)$. Then we can define a local flow $\tilde{\Phi}: O_{\rho_1}\times T_xM(\rho_1)\to T_xM$ as
$$\tilde{\Phi}(v, \xi)=\exp_x^{-1}\Phi(v, \exp_x\xi),$$
for any $(v,\xi)\in O_{\rho_1}\times T_xM(\rho_1)$. Since $D_x\Phi_0=Id$ for any $x\in M$, we can choose $\rho_1$ small enough such that $$\|D_\xi\tilde{\Phi}_v-Id\|<\frac{1}{4}$$
for any $x\in M, \xi\in T_xM(\rho_1)$ and $v\in O_{\rho_1}$.

Note that $\{X_1(x), X_2(x), \cdots, X_d(x)\}$ is linear independent. Also by the compactness of $M$ and continuity of $X_i(x)(1\leq i\leq d)$,  there is $C\geq 1$ such that for any $x\in M, (a_1, a_2, \cdots, a_d)\in\mathbb{R}^d$, one has
$$C^{-1}\sum_{i=1}^d|a_i|\leq\|a_1X_1(x)+a_2X_2(x)+\cdots+a_dX_d(x)\|\leq C\sum_{i=1}^d|a_i|.$$

Let $\tilde X_i(\xi)=(D_{\xi}\exp_{x})^{-1}(X_i(\exp_x\xi))$ for any $\xi\in T_xM(\rho_1)$ and $1\leq i\leq d$. Then $\tilde{X}_i$ is the vector field on $T_xM(\rho_1)$ which has solution curve $\tilde{\Phi}(te_i, \cdot)$ for any $1\leq i\leq d$. By choosing $\rho_1>0$ small enough, we can assume that
$$\|\tilde{X}_{i}(\xi)-X_i(x)\|<\frac{1}{4C}$$
for any $x\in M$ and $\xi\in T_xM(\rho_1)$.

Now we can choose $C^{-1}\rho_1>r_0>0$ small enough such that for any $\xi\in T_x M(r_0)$ and any $(a_1, a_2, \cdots, a_d)\in\mathbb{R}^d$ with $\|a_1X_1(x)+a_2X_2(x)+\cdots+a_dX_d(x)\|<r_0$ (note here we have $\|(a_1, a_2, \cdots, a_d)\|<\rho_1$), one has $\tilde\Phi((a_1, a_2, \cdots, a_d), \xi)\in T_x M(\rho_1)$.

Denote by $\tilde F_x=\exp_x^{-1}\circ F_x$. Then $\tilde{F}_x: T_xM(r_0)\to T_x M$ is a $C^1$ map defined on $T_xM(r_0)$. For any $\xi=\zeta+a_1X_1(x)+a_2X_2(x)+\cdots+a_dX_d(x)\in T_x M(r_0)$, a straightforward computation of the directional derivative of $\tilde{F}_x$ at $\xi$ along
the direction $X_i(x)$ gives
\begin{eqnarray*}
% \nonumber to remove numbering (before each equation)
  D_\xi \tilde{F}_x(X_i(x)) &=& (\exp_x^{-1})_*(D_\xi F_x(X_i(x))) \\
   &=& (\exp_x^{-1})_*(X_i(\Phi((a_1,a_2, \cdots,a_d), \exp_x(\zeta)))) \\
   &=& \tilde{X}_i(\tilde{\Phi}((a_1, a_2, \cdots, a_d), \zeta))
\end{eqnarray*}
for any $1\leq i\leq d$. Thus for any $x\in M, \xi\in T_xM(r_0)$ and any $1\leq i\leq d$ we have
$$\|D_\xi \tilde{F}_x(X_i(x))-X_i(x)\|=\|\tilde{X}_i(\tilde{\Phi}((a_1, a_2, \cdots, a_d), \zeta))-X_i(x)\|<\frac{1}{4C}.$$
Then for any $\xi\in T_xM(r_0)$ and any $a_1X_1(x)+\cdots+a_dX_d(x)\in V_x$, we have
\begin{eqnarray*}
% \nonumber to remove numbering (before each equation)
 &&\|D_\xi \tilde{F}_x(a_1X_1(x)+\cdots+a_dX_d(x))-(a_1X_1(x)+\cdots+a_dX_d(x))\|  \\
 && <\frac{1}{4C}\sum_{i=1}^d|a_i|\leq\frac{1}{2}\|a_1X_1(x)+\cdots+a_dX_d(x)\|.
\end{eqnarray*}
Hence we have $\|(D_\xi \tilde{F}_x-Id)|_{V_x}\|<\frac{1}{4}$ for any $x\in M, \xi\in T_xM(r_0)$.

Likewise, for any vector $\nu\in N_x$, a straightforward computation of the directional
derivative of $\tilde{F}_x$ at $\xi$ along the direction $\nu$ gives
$$D_\xi\tilde{F}_x\nu=D_\xi\tilde\Phi_{(a_1, a_2, \cdots, a_d)}\nu.$$
By the choice of $r_0<C^{-1}\rho_1$ we know that $\|(a_1, a_2, \cdots, a_d)\|\leq \rho_1$, and then
$$\|(D_\xi\tilde{F}_x-Id)|_{N_x}\|=\|(D_\xi\tilde\Phi_{(a_1, a_2, \cdots, a_d)}-Id)|_{N_x}\|<\frac{1}{4}.$$
For any $x\in M$ and $\xi\in T_xM(r_0)$, we have
$$\|D_\xi\tilde{F}_x-Id\|\leq \|(D_\xi\tilde{F}_x-Id)|_{N_x}\|+\|(D_\xi\tilde{F}_x-Id)|_{V_x}\|<\frac{1}{4}+\frac{1}{4}=\frac{1}{2}.$$

For any $\xi\in T_xM(r_0)$, by the fact that $\|D_\xi\tilde{F}_x-Id\|<\frac{1}{2}$ we know that $D_\xi\tilde{F}_x$ is a linear isomorphism, then by the inverse function theorem we know that $\tilde{F}_x:T_x M(r_0)\to T_x M$ is a local diffeomorphism. Now we proceed to prove that $\tilde{F}_x:T_x M(r_0)\to T_x M$ is injective. Denote by $g=F_x-Id: T_x M(r_0)\to T_xM$, then $g$ is Lipschitz map with Lipschitz constant $\frac{1}{2}$ since $\|D_\xi g\|<\frac{1}{2}$ for any $\xi\in T_xM(r_0)$. For any $\xi\in T_xM$, we prove that there exists at most one $\xi'\in T_x M(r_0)$ such that $\tilde{F}_x(\xi')=\xi'+g(\xi')=\xi$. It is easy to see that $\tilde{F}_x(\xi')=\xi'+g(\xi')=\xi$ is equivalent to $\xi'=\xi-g(\xi')$. Fixed $\xi$, consider $T: T_xM(r_0)\to T_x M$ defined by $T(\xi')=\xi-g(\xi')$. One can check that $\|T(\xi')-T(\xi'')\|=\|g(\xi')-g(\xi'')\|<\frac{1}{2}\|\xi'-\xi''\|$ for any $\xi',\xi''\in T_xM(r_0)$. Thus there exists at most one point $\xi'\in T_xM(r_0)$ such that $T(\xi')=\xi'$, that is, there exists at most one point $\xi'$ such that $\tilde{F}_x(\xi')=\xi$. This proves that $\tilde{F}_x:T_xM(r_0)\to T_x M$ is injective. This proves that $\tilde{F}_x$ is an embedding from $T_xM(r_0)$ to $T_xM$ and then $F_x=\exp_x\circ\tilde{F}_x$ is an embedding from $T_x M(r_0)$ to $M$.

For any $\xi\in T_xM(r_0)$, we have $$\|D_{\xi}\tilde{F}_x\|\leq \|Id\|+\|D_{\xi}\tilde{F}_x-Id\|<\frac{3}{2} \text{ and } m(D_{\xi}\tilde{F}_x)>m(Id)-\|D_{\xi}\tilde{F}_x-Id\|>\frac{1}{2}.$$ Then we can easily check that
$$\|D_{\xi}F_x\|=\|D_\xi(\exp_x\circ \tilde{F}_x)\|<\frac{3}{2}\times\frac{3}{2}<3, \text{ and } m(D_{\xi}F_x)=m(D_\xi(\exp_x\circ \tilde{F}_x))>\frac{1}{2}\times\frac{2}{3}=\frac{1}{3}.$$
This ends the proof of the lemma.
\end{proof}

\begin{Lemma}\label{lem8}
Let $M$ be a compact Riemannian manifold without boundary, $\Phi:\mathbb{R}^d\times M\to M$ be a $C^1$ homogeneous $\mathbb{R}^d$-action, $0<d\leq\dim M$. For any $\mu\in(0, \varepsilon_0(\Phi)/3)$, there is $\delta>0$ small enough such that for any $x\in M$, if a continuous map $\alpha:\bar{O}_1\subset\mathbb{R}^d\to B_{\delta}(x)$ satisfied $\alpha(v)\in Orb(x, \Phi)$ and $\alpha(0)=x$, then $\alpha(v)\in\Phi_{\bar{O}_{\mu}}(x)$ for any $v\in\bar{O}_1$.
\end{Lemma}

\begin{proof}
Take $r_0$ as above Lemma \ref{lem7}. Let $\mu\in(0, \varepsilon_0(\Phi)/3)$ be given. Choose $0<\delta<r_0/3$ such that for any $(a_1, a_2, \cdots, a_d)\in \mathbb{R}^d$,
$$\|a_1X_1(x)+a_2X_2(x)+\cdots +a_dX_d(x)\|<3\delta$$
implies $(a_1, a_2, \cdots, a_d)\in \bar{O}_\mu$ for any $x\in M$.

For any $x\in M$, denote by $\pi_x$ the orthogonal projection from $T_xM$ to $N_x$. Note that $m(D_\xi F_x)>1/3$ for any $\xi\in T_xM(r_0)$ and $F_x(0_x)=x$, we have $B_\delta(x)\subset F_x(T_xM(r_0))$. Thus we can define a continuous map $P_x: B_{\delta}(x)\to N_x$ by letting $P_x(y)=\pi_x\circ F_x^{-1}(y)$ for any $y\in B_\delta(x)$.

Let $\alpha:\bar{O}_1\subset\mathbb{R}^d\to B_{\delta}(x)$ be a continuous map with $\alpha(v)\in Orb(x, \Phi)$ for all $v\in \bar{O}_1$ and $\alpha(0)=x$. We can get a continuous map $P_x\circ\alpha$ with image in $N_x$. Given $u\in\mathbb{R}^d$ with $\Phi_u(x)\in B_\delta(x)$, we can write
$$F_x^{-1}(\Phi_u(x))=\zeta+a_1X_1(x)+a_2X_2(x)+\cdots +a_dX_d(x),$$
where $\zeta=P_x(\Phi_u(x))$ and $(a_1,a_2,\cdots,a_d)\in\mathbb{R}^d$. We can find rational point $$u'=(a_1', a_2', \cdots, a_d')\in\mathbb{R}^d$$ arbitrarily close to $u$ such that $F_x(\zeta+a_1'X_1(x)+a_2'X_2(x)+\cdots+a_d'X_d(x))\in B_\delta(x)$, then we know that $P_x(\Phi_u(x))=P_x(\Phi_{u'}(x))=\zeta$. From this fact we can see that the image of
$$\{\Phi_u(x)|u\in\mathbb{R}^d, \Phi_{u}(x)\in B_{\delta}(x)\}$$
under the $P_x$, is at mostly countable. Thus the image of $P_x\circ \alpha$ consists of at mostly countable points in $N_x$.

By definition we can see that $F_x(0_x)=x$, thus $P_x(x)=\pi_x(F_x^{-1}(x))=0_x$. Assume that we have a point $v\in\bar{O}_1$ such that $P_x\alpha(v)\neq 0_x$. We can take a line segment $\gamma(t)$ in $\bar{O}_1$ connecting $0_x$ and $v$. Thus we know that $P_x\circ\alpha\circ\gamma$ is a curve in $N_x$, this contradicts with the image of $P_x\circ\alpha$ is at mostly countable. Hence we have $P_x\circ\alpha(v)=0_x$ for any $v\in\bar{O}_1$. For any $v\in\bar{O}_1$, we know that $P_x(\alpha(v))=0_x$, then we know that $F_x^{-1}(\alpha(v))=a_1X_1(x)+a_2X_2(x)+\cdots+a_dX_d(x)$ for some $(a_1, a_2, \cdots, a_d)\in\mathbb{R}^d$. Note that $$\|a_1X_1(x)+a_2X_2(x)+\cdots+a_dX_d(x)\|=\|F_x^{-1}(\alpha(v))\|<3\delta,$$ thus we have $(a_1, a_2, \cdots, a_d)\in \bar{O}_\mu$ by the choice of $\delta$. And we have $\alpha(v)=\Phi_{(a_1, a_2, \cdots, a_d)}(x)$ by the definition of $F_x$. This ends the proof of the lemma.
\end{proof}

\begin{Lemma}\label{lem9}
Let $\Phi:\mathbb{R}^d\times M\to M$ be a separating $C^1$ homogeneous $\mathbb{R}^d$-action on a compact boundaryless Riemannian manifold $M$. For any $\Psi\in\mathcal{Z}^0(\Phi)$, there exist constant $a>0,\mu\in(0, \varepsilon_0(\Phi)/3)$ and a map $z:\bar{O}_a\times M\to\bar{O}_\mu$ such that $\Psi_v(x)=\Phi_{z(v, x)}(x)$ for any $(v,x)\in\bar{O}_a\times M$. Moreover,
\begin{enumerate}
\item $z$ is continuous;
\item $z(u+v,x)=z(u,x)+z(v,\Psi_u(x))$ for any $x\in M$ and $u,v\in\bar{O}_a$ with $v+u\in\bar{O}_a$;
\item $z(u, \Phi_v(x))=z(u, x)$ for any $x\in M$ and $u\in\bar{O}_a$ and $v\in\mathbb{R}^d$;
\item there exists a continuous map $A: M\to\mathcal{M}_{d\times d}(\mathbb{R})$ such that $z(u, x)=A(x)u$ for any $x\in M$ and $u\in\bar{O}_a$.
\end{enumerate}
\end{Lemma}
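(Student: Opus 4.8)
The plan is to mirror the proof of Lemma~\ref{lem5} almost line by line, using the $d$-dimensional flowbox statement Lemma~\ref{lem8} in place of Lemma~\ref{lem4} and the rigidity $\varepsilon_0(\Phi)>0$ of Lemma~\ref{lem3.1} in place of $\varepsilon_0(\phi)>0$. Fix $\mu\in(0,\varepsilon_0(\Phi)/3)$ and let $\delta>0$ be the constant provided by Lemma~\ref{lem8}; shrinking $\delta$ if necessary, use the separating hypothesis to also guarantee that $d(\Phi_w(x),\Phi_w(y))<\delta$ for all $w\in\mathbb{R}^d$ implies $y\in Orb(x,\Phi)$ (the conclusion of Lemma~\ref{lem8} persists under shrinking $\delta$, since $B_{\delta'}(x)\subset B_{\delta}(x)$). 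Given $\Psi\in\mathcal{Z}^0(\Phi)$, continuity of $\Psi$ and compactness of $M$ yield $a>0$ with $d(\Psi_v(x),x)<\delta$ for all $(v,x)\in\bar{O}_a\times M$. Since $\Psi$ commutes with $\Phi$, for every $w\in\mathbb{R}^d$ we get $d(\Phi_w(x),\Phi_w(\Psi_v(x)))=d(\Phi_w(x),\Psi_v(\Phi_w(x)))<\delta$, so the separating property forces $\Psi_v(x)\in Orb(x,\Phi)$; since moreover $\Psi_v(x)\in B_\delta(x)$ and $\Psi_0(x)=x$, applying Lemma~\ref{lem8} to the rescaled continuous map $v\mapsto\Psi_{av}(x)$ on $\bar{O}_1$ shows that $\Psi_u(x)\in\Phi_{\bar{O}_\mu}(x)$ for every $u\in\bar{O}_a$.

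The exponent $w\in\bar{O}_\mu$ with $\Phi_w(x)=\Psi_u(x)$ is unique, since two such choices would differ by a nonzero vector of norm $\le 2\mu<\varepsilon_0(\Phi)$ fixing $x$, contradicting Lemma~\ref{lem3.1}; define $z(u,x)$ to be this $w$. Continuity of $z$ follows as in Lemma~\ref{lem5}: if $(v_n,x_n)\to(v_0,x_0)$ while $z(v_n,x_n)\to w_0\ne z(v_0,x_0)$ (pass to a subsequence, $\bar{O}_\mu$ being compact), then passing to the limit in $\Phi_{z(v_n,x_n)}(x_n)=\Psi_{v_n}(x_n)$ gives $\Phi_{w_0}(x_0)=\Psi_{v_0}(x_0)=\Phi_{z(v_0,x_0)}(x_0)$, so $\Phi_{w_0-z(v_0,x_0)}(x_0)=x_0$ with $\|w_0-z(v_0,x_0)\|\le 2\mu<\varepsilon_0(\Phi)$, a contradiction. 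Item~2 comes from $\Phi_{z(u+v,x)}(x)=\Psi_{u+v}(x)=\Psi_v(\Psi_u(x))=\Phi_{z(u,x)+z(v,\Psi_u(x))}(x)$ together with the fact that $z(u+v,x)-z(u,x)-z(v,\Psi_u(x))$ has norm $\le 3\mu<\varepsilon_0(\Phi)$, hence vanishes. Item~3 holds for every $v\in\mathbb{R}^d$ from $\Phi_{v+z(u,\Phi_v(x))}(x)=\Psi_u(\Phi_v(x))=\Phi_v(\Psi_u(x))=\Phi_{v+z(u,x)}(x)$, since $z(u,\Phi_v(x))-z(u,x)$ has norm $\le 2\mu<\varepsilon_0(\Phi)$. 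Combining items~2 and~3, $z(\cdot,x)$ is additive on $\bar{O}_a$: $z(u+v,x)=z(u,x)+z(v,\Phi_{z(u,x)}(x))=z(u,x)+z(v,x)$ whenever $u,v,u+v\in\bar{O}_a$.

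For item~4, additivity gives $z(u,x)=n\,z(u/n,x)$ for every positive integer $n$ and $u\in\bar{O}_a$ (all of $u/n,2u/n,\dots,u$ remain in $\bar{O}_a$), hence $z(qu,x)=q\,z(u,x)$ for rational $q\in[-1,1]$, and then $z(su,x)=s\,z(u,x)$ for all real $s\in[-1,1]$ by continuity of $z$ in its first variable. Since the partial sums $\sum_{i\le k}u_ie_i$ of any $u=(u_1,\dots,u_d)\in\bar{O}_a$ also lie in $\bar{O}_a$, additivity yields $z(u,x)=\sum_{i=1}^d z(u_ie_i,x)=\sum_{i=1}^d (u_i/a)\,z(ae_i,x)=A(x)u$, where $A(x)\in\mathcal{M}_{d\times d}(\mathbb{R})$ is the matrix whose $i$-th column is $a^{-1}z(ae_i,x)$; continuity of $A$ is immediate from that of $z$.

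I expect no serious obstacle here: the geometric content is already packaged into Lemmas~\ref{lem7} and~\ref{lem8}, and what remains is bookkeeping. The two points deserving care are ensuring that every vector to which one applies the $\varepsilon_0(\Phi)$-rigidity of Lemma~\ref{lem3.1} has norm strictly below $\varepsilon_0(\Phi)$ (which is why $\mu<\varepsilon_0(\Phi)/3$), and promoting the Cauchy-type relation for $z(\cdot,x)$—which a priori holds only where the relevant sums stay in $\bar{O}_a$—to a genuine linear map, which the norm estimates above make routine.
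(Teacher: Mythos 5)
Your proof is correct and follows essentially the same route as the paper's: package the geometry into Lemma \ref{lem8}, use the separating property plus commutativity to land $\Psi_u(x)$ on $Orb(x,\Phi)\cap B_\delta(x)$, pin down $z(u,x)\in\bar{O}_\mu$ uniquely via $\varepsilon_0(\Phi)$, and then run the cocycle/additivity/rational-scaling argument to extract the matrix $A(x)$. The only (harmless) deviations are that you prove item 3 for all $v\in\mathbb{R}^d$ in one step—which is valid since $z$ always takes values in $\bar{O}_\mu$, so the difference $z(u,\Phi_v(x))-z(u,x)$ has norm at most $2\mu$ regardless of $v$, whereas the paper first treats $v\in\bar{O}_\mu$ and then iterates—and your item 4 is slightly more careful than the paper's in checking that the partial sums $\sum_{i\le k}u_ie_i$ stay in $\bar{O}_a$ before invoking additivity.
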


\begin{proof}
Let $\mu\in(0, \varepsilon_0(\Phi)/3)$ be given and $\delta$ be the constant as in Lemma \ref{lem8}. Since $\Phi$ is separating, we can take $\delta>0$ small enough such that for any $x,y\in M$, if
$$d(\Phi_v(x), \Phi_v(y))<\delta$$
for all $v\in\mathbb{R}^d$, then $y\in Orb(x,\Phi)$. $\delta$ is the separating
constant of $\Phi$. Because $\Psi$ is continuous on $\mathbb{R}^d\times M$, according to compactness of $M$, there exists a constant $a>0$ such that
$$d(\Psi_u(x),x)<\delta$$
for all $(u,x)\in\bar{O}_{a}\times M$.

For all $u\in\bar{O}_a,v\in\mathbb{R}^d, x\in M$, we have $$d(\Phi_u(x),\Phi_v(\Psi_u(x)))=d(\Phi_v(x),\Psi_u(\Phi_v(x)))<\delta.$$
Since $\delta$ is the separating constant, we have $\Psi_u(x)\in Orb(x, \Phi)$ for all $u\in\bar{O}_a$. Note that $\Psi_u(x)\in B_{\delta}(x)$ for all $u\in\bar{O}_{a}$ and $\Psi_0(x)=x$, by Lemma \ref{lem8} we know that there is $\eta=z(u,x)\in\bar{O}_{\mu}$ such that $\Psi_u(x)=\Phi_{\eta}(x)$. By the choice of $\mu\in(0, \varepsilon_0(\Phi)/3)$, we know that for any $u_1, u_2\in\bar{O}_{\mu}$, when $u_1\neq u_2$ we have $\Phi_{u_1}(x)\neq\Phi_{u_2}(x)$ for any $x\in M$. Hence $\eta=z(u,x)$ is uniquely defined on $(u,x)\in\bar{O}_a\times M$. This gives a map $\eta=z(u,x)$ on $\bar{O}_a\times M$.

If $\eta=z(u,x)$ is not continuous, then one can find a sequence of $\{(u_n, x_n)\}$ in $\bar{O}_a\times M$ with $(u_n, x_n)\to (u_0, x_0)\in\bar{O}_a\times M$ as $n\to\infty$ such that $\|z(u_n, x_n)-z(u_0, x_0)\|\nrightarrow 0$. By choosing a subsequence we can assume that $z(u_n, x_n)-z(u_0, x_0)\to \eta_0\in\bar{O}_{\mu}$. Since $\eta_0\neq 0$, we have
$$\lim\limits_{n\to\infty}d(\Phi_{z(u_n, x_n)}(x_0), \Phi_{z(u_0, x_0)}(x_0))=d(\Phi_{z(u_0, x_0)+\eta_0}(x_0), \Phi_{z(u_0, x_0)}(x_0))\neq 0.$$
On the other hand we have
\begin{eqnarray*}
% \nonumber to remove numbering (before each equation)
 d(\Phi_{z(u_n, x_n)}(x_0), \Phi_{z(u_0, x_0)}(x_0)) &\leq& d(\Phi_{z(u_n, x_n)}(x_n), \Phi_{z(u_n, x_n)}(x_0))+d(\Phi_{z(u_n, x_n)}(x_n), \Phi_{z(u_0, x_0)}(x_0)) \\
   &=& d(\Phi_{z(u_n, x_n)}(x_n), \Phi_{z(u_n, x_n)}(x_0))+d(\Psi_{u_n}(x_n), \Psi_{u_0}(x_0)).
\end{eqnarray*}
Since $\|z(u_n, x_n)\|$ is bounded and $d(x_n, x_0)\to 0$ we know $d(\Phi_{z(u_n, x_n)}(x_n), \Phi_{z(u_n, x_n)}(x_0))\to 0$. Since $u_n\to u_0$ and $x_n\to x_0$ we have $d(\Psi_{u_n}(x_n), \Psi_{u_0}(x_0))\to 0$, thus we have
$$\lim\limits_{n\to\infty}d(\Phi_{z(u_n, x_n)}(x_0), \Phi_{z(u_0, x_0)}(x_0))=0,$$ a contradiction. This proves that $\eta=z(u,x)$ is continuous on $\bar{O}_a\times M$.

Let $u, v\in\bar{O}_a$ with $u+v\in\bar{O}_a$ be given. For any $x\in M$, we have
\begin{eqnarray*}
% \nonumber to remove numbering (before each equation)
  \Phi_{z(u+v,x)}(x) &=& \Psi_{u+v}(x)=\Psi_v(\Psi_u(x))=\Phi_{z(v, \Psi_u(x))}(\Psi_u(x)) \\
   &=& \Phi_{z(v, \Psi_u(x))}(\Phi_{z(u,x)}(x))=\Phi_{z(u,x)+z(v,\Psi_u(x))}(x),
\end{eqnarray*}
and then
$$\Phi_{z(u+v,x)-(z(u,x)+z(v,\Psi_u(x)))}(x)=x.$$
Note that $\|z(u+v,x)-(z(u,x)+z(v,\Psi_u(x)))\|\leq 3\mu<\varepsilon_0(\Phi)$, then we can see that $z(u+v,x)-(z(u,x)+z(v,\Psi_u(x)))=0$ by the property of $\varepsilon_0(\Phi)$. Thus for any $x\in M$ and $u,v\in\bar{O}_a$ with $u+v\in\bar{O}_a$, we have $z(u+v,x)=z(u,x)+z(v,\Psi_u(x))$. This proves item 2 of the lemma.

Let $v\in\bar{O}_a$, $u\in\bar{O}_{\mu}$ and $x\in M$ be given. Note that
\begin{eqnarray*}
% \nonumber to remove numbering (before each equation)
  \Phi_{u+z(v, \Phi_u(x))}(x) &=& \Phi_{z(v, \Phi_u(x))}(\Phi_u(x))=\Psi_v(\Phi_u(x)) \\
   &=& \Phi_u(\Psi_v(x))=\Phi_u(\Phi_{z(v,x)}(x))=\Phi_{u+z(v, x)}(x).
\end{eqnarray*}
Thus we have $\Phi_{z(v, \Phi_u(x))-z(v, x)}(x)=x$. Since $\|z(v, \Phi_u(x))-z(v, x)\|\leq 2\mu<\varepsilon_0(\Phi)$, we have $z(v, \Phi_u(x))-z(v, x)=0$. This proves that $z(v, \Phi_u(x))=z(v, x)$ holds for any $x\in M, v\in\bar{O}_a$ and $u\in\bar{O}_\mu$.

Let $v\in\bar{O}_a$ and $u\in\mathbb{R}^d$ and $x\in M$ be given. We can find $n\in\mathbb{N}$ big enough such that $\|n^{-1}u\|\leq\mu$, then we can see that
$$z(v,\Phi_{\frac{1}{n}u}(x))=z(v,x).$$
For $\Phi_{\frac{1}{n}u}(x)$, we have
$$z(v,\Phi_{\frac{2}{n}u}(x))=z(v,\Phi_{\frac{1}{n}u}(\Phi_{\frac{1}{n}u}(x)))=z(v,\Phi_{\frac{1}{n}u}(x))=z(v,x).$$
Inductively, we have
$$z(v, \Phi_u(x))=z(v, \Phi_{\frac{n-1}{n}u}(x))=\cdots=z(v, \Phi_{\frac{1}{n}u}(x))=z(v,x).$$
This proves item 3 of the lemma.

From item 2 and 3 we can see that for any $v, u\in\bar{O}_a$ with $u+v\in\bar{O}_a$ and any $x\in M$, we have
$$z(v+u, x)=z(u, x)+z(v, \Psi_u(x))=z(u, x)+z(v, \Phi_{z(u, x)}(x))=z(u, x)+z(v, x).$$
Then we can see that for any $n\in\mathbb{Z}^+,u\in\mathbb{R}^d$ with $\|u\|\leq a$, we have $nz(n^{-1}u, x)=z(u, x)$, and then we have $z(n^{-1}u, x)=n^{-1}z(u, x)$. And then we have $z(\frac{m}{n}u, x)=\frac{m}{n}z(u,x)$ for any rational number $\frac{m}{n}\in[0, 1]$. Note that $z(-u, x)=-z(u, x)$, we can see that $z(\frac{m}{n}u, x)=\frac{m}{n}z(u,x)$ for any rational number $\frac{m}{n}\in[-1,1]$, by the continuity of $z(u, x)$ we can see that $z(tu, x)=tz(u,x)$ for any $t\in[-1,1]$.

Assume that $$a^{-1}z(ae_i, x)=\sum_{j=1}^d a_{ji}(x)e_j.$$
We get a matrix $A(x)=(a_{ji}(x))_{d\times d}$. For any $u=t_1e_1+t_2e_2+\cdots+t_de_d\in\mathbb{R}^d$ with $\|u\|\leq a$, we have
$$z(u, x)=\sum_{i=1}^d z(t_ie_i, x)=\sum_{i=1}^da^{-1}t_iz(a e_i, a)=\sum_{i=1}^{d}\sum_{j=1}^da_{ji}(x)t_ie_j=A(x)u.$$
This proves item 4 of the lemma.
\end{proof}

Now we proceed to prove Theorem B.

\bigskip

{\noindent\it Proof of Theorem B.}
Let $\Psi\in\mathcal{Z}^0(\Phi)$. Then we can take $a$ and $z(u, x)=A(x)u$ as in Lemma \ref{lem9}.
By the continuity of $z(u, x)$ and the expression of $A(x)$ we can see that $A(x)$ is continuous on $M$. Let $x\in M$ be given, for any $u\in\mathbb{R}^d$ with $\|u\|\leq a$, we have
$$A(\Phi_v(x))u=z(u,\Phi_v(x))=z(u,x)=A(x)u$$
for any $v\in\mathbb{R}^d$. By the expression of $A(x)$, one can get that $A(\Phi_v(x))=A(x)$ for any $x\in M$ and $v\in\mathbb{R}^d$. Hence $A(x)$ is constant along orbit of $\Phi$.

By the fact that $\Psi_u(x)=\Phi_{z(u,x)}(x)$ for any $x\in M$ and $u\in\bar{O}_a$, we can easily see that $\Psi_u(x)=\Phi_{A(x)u}(x)$ is true for any $x\in M$ and $u\in\bar{O}_a$. Fixed any $x\in M$ and $u\in\mathbb{R}^d$, we can take $n\in\mathbb{N}$ big enough such that $\|n^{-1}u\|\leq a$. Denote by $\tau=n^{-1}u$, then we have
$$\Psi_{2\tau}(x)=\Psi_\tau(\Psi_\tau(x))=\Phi_{A(\Psi_{\tau}(x))\tau}(\Phi_{A(x)\tau}(x))=\Phi_{A(\Phi_{A(x)\tau}(x))\tau}(\Phi_{A(x)\tau}(x))=\Phi_{2A(x)\tau}(x),$$
$$\Psi_{3\tau}(x)=\Psi_\tau(\Psi_{2\tau}(x))=\Phi_{A(\Psi_{2\tau}(x))\tau}(\Phi_{2A(x)\tau}(x))=\Phi_{A(\Phi_{2A(x)\tau}(x))\tau}(\Phi_{2A(x)\tau}(x))=\Phi_{3A(x)\tau}(x),$$
and then by induction we can get that $\Psi_u(x)=\Psi_{n\tau}(x)=\Phi_{nA(x)\tau}(x)=\Phi_{A(x)u}(x)$. This ends the proof of Theorem B.
\hfill\qedsymbol

\begin{Remark}\label{cor2}
Consider a $C^1$-separating and homogeneous $\mathbb{R}^d$-action $\Phi:\mathbb{R}^d\times M\to M$,
and $\Psi\in\mathcal{Z}^1(\Phi)$. By Theorem B we know that there is $A:M\to\mathcal{M}_{d\times d}(\mathbb{R})$ such that $\Psi(v,x)=\Phi(A(x)v,x)$ for all $v\in\mathbb{R}^d$ and $x\in M$. Denote by $X_i(\cdot)=\frac{d\Phi(te_i,\cdot)}{dt}|_{t=0}$ and $Y_i(\cdot)=\frac{d\Psi(te_i,\cdot)}{dt}|_{t=0}$ the vector field generated by $\Phi$ and $\Psi$ respectively. The linear map $A(x)$ is represented by the matrix of representation of the vectors $(Y_i(x))_{1\leq i\leq d}$ on the basis $(X_i(x))_{1\leq i\leq d}$.
\end{Remark}

\end{document}